\def\sign{\mbox{\rm sign\,}}
\theoremstyle{plain}%
  \newtheorem{theorem}{Theorem}
  \newtheorem{corollary}{Corollary}
  \newtheorem{proposition}{Proposition}
  \newtheorem{lemma}{Lemma}%
\theoremstyle{remark}
\newtheorem*{remark}{Remark}
\theoremstyle{definition}
\newtheorem*{thmA}{Theorem A}
\newtheorem*{thmB}{Theorem B}
\newtheorem*{thmC}{Theorem C}
\begin{document}
\renewcommand{\theequation}{\arabic{equation}}
\title{Sum formulas of multiple zeta values with arguments are multiples of a positive integer}
\author{Kwang-Wu Chen\footnote{
Department of Mathematics, University of Taipei,
No. $1$,  Ai-Guo West Road, Taipei $10048$, Taiwan.
E-mail: kwchen@uTaipei.edu.tw},
Chan-Liang Chung\footnote{
Institute of Mathematics, Academia Sinica, $6$F, 
Astronomy-Mathematics Building, 
No. $1$, Sec. $4$, Roosevelt Road, 
Taipei $10617$, Taiwan.
E-mail: andrechung@gate.sinica.edu.tw},
Minking Eie\footnote{
Department of Mathematics, National Chung Cheng University,
$168$ University Rd., Minhsiung, Chiayi $62102$, Taiwan.
E-mail: minking@math.ccu.edu.tw}
}
\maketitle

\begin{abstract}  
For $k\leq n$, let $E(mn,k)$ be the sum of all multiple zeta values of depth $k$ and weight $mn$
with arguments are multiples of $m\geq 2$. More precisely, 
$E(mn,k)=\sum_{|\bm\alpha|=n}\zeta(m\alpha_1,m\alpha_2,\ldots, m\alpha_k)$. 
In this paper, we develop a formula to express $E(mn,k)$ in terms of 
$\zeta(\{m\}^p)$ and $\zeta^\star(\{m\}^q)$, $0\leq p,q\leq n$. In particular,
we settle Gen\v{c}ev's conjecture on the evaluation of $E(4n,k)$ and 
also evaluate $E(mn,k)$ explicitly for  small even $m\leq 8$.
\end{abstract}

\noindent{\small {\it Key Words:} 
Multiple Zeta Values, Multiple Zeta-Star Values.

\noindent{\it Mathematics Subject Classification 2010:}
Primary 11M32, 11M35; Secondary 11B68.}

\setlength{\baselineskip}{18pt}
\section{Introduction}\label{sec.1}
The multiple zeta values (MZVs) and the multiple zeta star values (MZSVs) are defined by 
\cite{BBBL2, Eie, Eie2, Gen}
\begin{eqnarray*}
\zeta(\alpha_1,\alpha_2,\ldots,\alpha_r) &=& \sum_{1\leq k_1<k_2<\cdots<k_r}
k_1^{-\alpha_1}k_2^{-\alpha_2}\cdots k_r^{-\alpha_r}\quad\mbox{and}\\
\zeta^\star(\alpha_1,\alpha_2,\ldots,\alpha_r) &=&
\sum_{1\leq k_1\leq k_2\leq\cdots\leq k_r}
k_1^{-\alpha_1}k_2^{-\alpha_2}\cdots k_r^{-\alpha_r}
\end{eqnarray*}
with positive integers $\alpha_1,\alpha_2,\ldots,\alpha_r$ and $\alpha_r\geq 2$ for the sake
of convergence. The numbers $r$ and $|\bm\alpha|=\alpha_1+\alpha_2+\cdots+\alpha_r$
are the depth and weight of $\zeta(\bm\alpha)$ and $\zeta^\star(\bm\alpha)$. For our convenience,
we let $\{a\}^k$ be $k$ repetitions of $a$ such that 
$\zeta(\{2\}^3)=\zeta(2,2,2)$ and $\zeta^\star(\{3\}^4,5)=\zeta^\star(3,3,3,3,5)$.

MZVs and MZSVs are strongly connected with each other, for example,
\begin{eqnarray*}
\zeta^\star(s_1,s_2) &=& \zeta(s_1,s_2)+\zeta(s_1+s_2)\quad\mbox{and}\\
\zeta^\star(s_1,s_2,s_3) &=& \zeta(s_1,s_2,s_3)+\zeta(s_1+s_2,s_3)+\zeta(s_1,s_2+s_3)+\zeta(s_1+s_2+s_3).
\end{eqnarray*}

A principal goal in the theoretical study of MZVs or MZSVs 
is to determine all possible algebraic relations among them.
Several explicit values are interesting and known for special index sets
(e.g. \cite{BBBL1, BBBL2, Mun, Zag}).
For example, Zagier \cite{Zag} evaluated the 
value of $\zeta(\{2\}^a,3,\{2\}^b)$.
For a partition ${\cal P}=\{P_1,P_2,\ldots,P_\ell\}$ of the set $[n]=\{1,2,\ldots,n\}$,
let $c(\cal P)=($card$\,P_1-1)!($card$\,P_2-1)!\cdots($card$\,P_\ell-1)!$.
Given a $n$-tuple $\mathbf i=\{i_1,i_2,\ldots,i_n\}$, we define
$$
\zeta(\mathbf i,{\cal P})=\prod^\ell_{s=1}\zeta\left(\sum_{j\in P_s}i_j\right).
$$
Let $S_n$ be the symmetric group of degree $n$.
In 1992, Hoffman \cite{Hof} gave the evaluations of 
$\sum_{\sigma\in S_n}\zeta(i_{\sigma(1)},i_{\sigma(2)},
\ldots,i_{\sigma(n)})$ and $\sum_{\sigma\in S_n}\zeta^\star(i_{\sigma(1)},i_{\sigma(2)},
\ldots,i_{\sigma(n)})$.
\begin{proposition}[\cite{Hof}, Theorem 2.1, 2.2] \label{pro.01} 
For any real $i_1,i_2,\ldots,i_n>1$, we have
\begin{eqnarray} \label{eq.01}
\sum_{\sigma\in S_n}\zeta(i_{\sigma(1)},i_{\sigma(2)},
\ldots,i_{\sigma(n)}) &=&\sum_{\forall{\cal P}=\{P_1,\ldots,P_\ell\}\mbox{ of }[n]}
(-1)^{n-\ell}c({\cal P})\zeta(\mathbf i,{\cal P}),\\ \label{eq.02} 
\sum_{\sigma\in S_n}\zeta^\star(i_{\sigma(1)},i_{\sigma(2)},
\ldots,i_{\sigma(n)}) &=&\sum_{\forall{\cal P}=\{P_1,\ldots,P_\ell\}\mbox{ of }[n]}
c({\cal P})\zeta(\mathbf i,{\cal P}).
\end{eqnarray}
\end{proposition}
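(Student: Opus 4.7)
The plan is to recast both symmetric sums as series over unrestricted tuples $(m_1,\ldots,m_n)\in\mathbb{Z}_+^n$, classify the terms by the equality pattern of the tuple (a set partition of $[n]$), and then extract the right-hand sides by a direct cycle-structure argument for (\ref{eq.02}) and by Möbius inversion on the set-partition lattice for (\ref{eq.01}).

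For (\ref{eq.02}) I would begin from
$$\sum_{\sigma\in S_n}\zeta^\star(i_{\sigma(1)},\ldots,i_{\sigma(n)})=\sum_{\sigma\in S_n}\sum_{k_1\leq\cdots\leq k_n}\prod_{j=1}^n k_j^{-i_{\sigma(j)}}.$$
Setting $m_l=k_{\sigma^{-1}(l)}$ rewrites the summand as $\prod_l m_l^{-i_l}$, and each $(m_l)\in\mathbb{Z}_+^n$ is obtained from exactly $|\mathrm{Stab}(m)|$ pairs $(\sigma,k)$, where $\mathrm{Stab}(m)\subseteq S_n$ is the subgroup permuting positions with equal $m$-values. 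Hence the symmetric sum equals
$$\sum_{m\in\mathbb{Z}_+^n}|\mathrm{Stab}(m)|\prod_{l=1}^n m_l^{-i_l}=\sum_{\pi\in S_n}\sum_{m\in\mathrm{Fix}(\pi)}\prod_{l=1}^n m_l^{-i_l},$$
obtained by swapping the roles of $m$ and $\pi\in\mathrm{Stab}(m)$. Since the tuples fixed by $\pi$ are precisely those constant on its cycles $C_1,\ldots,C_k$, the inner sum factors as $\prod_{p}\zeta\bigl(\sum_{j\in C_p}i_j\bigr)$. Collecting permutations $\pi$ by their cycle structure, viewed as a set partition $\mathcal{P}$ of $[n]$, contributes the multiplicity $\prod_{s}(|P_s|-1)!=c(\mathcal{P})$ and yields (\ref{eq.02}).

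The same reindexing applied to (\ref{eq.01}) produces $\sum_{\sigma}\zeta(i_{\sigma(1)},\ldots,i_{\sigma(n)})=\sum_{(m_j)\text{ pairwise distinct}}\prod_j m_j^{-i_j}$, because the strict inequalities in the definition of $\zeta$ force distinctness. Tuples whose equality pattern is coarser than a partition $\mathcal{P}$ are exactly those constant on each block of $\mathcal{P}$, so their unrestricted sum is $\prod_s\zeta(I_s)=\zeta(\mathbf i,\mathcal{P})$. Möbius inversion on the set-partition lattice, with the classical value $\mu(\hat 0,\mathcal{P})=(-1)^{n-\ell}\prod_s(|P_s|-1)!=(-1)^{n-\ell}c(\mathcal{P})$, then extracts the all-distinct contribution and gives (\ref{eq.01}). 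The main delicate point is the very first reindexing: one must verify that $m_l=k_{\sigma^{-1}(l)}$ matches the weight factor $\prod k_j^{-i_{\sigma(j)}}$ correctly and that each $(m_l)$ is hit with multiplicity exactly $|\mathrm{Stab}(m)|$. Once this is in place, the cycle-sum rewriting in the star case and the Möbius-function evaluation in the non-star case are both standard, and convergence of all intermediate series is guaranteed by the hypothesis $i_j>1$ for every $j$.
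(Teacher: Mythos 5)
The paper does not actually prove this proposition: it is quoted verbatim from Hoffman's 1992 paper with a citation, so there is no internal argument to compare against. Your proof is correct and self-contained, and it is a clean way to establish both identities. The key reindexing is right: with $m_l=k_{\sigma^{-1}(l)}$ one has $\prod_j k_j^{-i_{\sigma(j)}}=\prod_l m_l^{-i_l}$, and a given tuple $m$ arises from exactly one weakly increasing $k$ (its sorted rearrangement) together with $|\mathrm{Stab}(m)|$ choices of $\sigma$; the subsequent swap $\sum_m|\mathrm{Stab}(m)|(\cdot)=\sum_\pi\sum_{m\in\mathrm{Fix}(\pi)}(\cdot)$ is a Burnside-type count, and grouping permutations by the set partition underlying their cycle structure produces exactly the factor $\prod_s(|P_s|-1)!=c(\mathcal P)$, giving (\ref{eq.02}). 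For (\ref{eq.01}), the strict inequalities make every distinct tuple appear exactly once, and Möbius inversion on $\Pi_n$ with $\mu(\hat 0,\mathcal P)=(-1)^{n-\ell}c(\mathcal P)$ recovers the all-distinct sum from the block-constant sums $\zeta(\mathbf i,\mathcal P)$; all interchanges are justified since every series involved has positive terms and is dominated by $\prod_j\zeta(i_j)$. Two small wording points: ``coarser than'' should read ``coarser than or equal to'' (a tuple constant on the blocks of $\mathcal P$ has equality pattern $\succeq\mathcal P$, with equality allowed), and it is worth stating explicitly that the partition-indexed sums are finite so the inversion is purely formal. Your route is essentially in the same combinatorial spirit as Hoffman's original argument (which also works over the partition lattice), so nothing is lost by substituting it; if anything, the Burnside step gives a slicker derivation of (\ref{eq.02}) than deducing it from (\ref{eq.01}) via the $\zeta^\star$-to-$\zeta$ expansion.
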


It is known that for $m\geq 2$, the infinite product
\begin{eqnarray*}
\prod^\infty_{n=1}\left(1+\frac{x^m}{n^m}\right) &=& 
1+\sum^\infty_{k=1}\frac 1{k^m}x^m
+\sum_{1\leq k_1<k_2}\frac 1{k_1^mk_2^m}x^{2m}+\cdots\\
&&+\sum_{1\leq k_1<k_2<\cdots<k_n}\frac 1{k_1^mk_2^m\cdots k_n^m}x^{mn}+\cdots.
\end{eqnarray*}
is the generating function of the sequence
$$
1,\zeta(m),\zeta(m,m),\ldots,\zeta(\{m\}^n),\ldots.
$$

On the other hand, the generating function of $\zeta^\star(\{m\}^n)$ 
\cite{Hof2, Mun} is given by
$$
\prod^\infty_{n=1}\left(1-\frac{x^m}{n^m}\right)^{-1}
=\prod^\infty_{n=1}\left(1+\frac{x^m}{n^m}+\frac{x^{2m}}{n^{2m}}
+\cdots+\frac{x^{km}}{n^{km}}+\cdots\right),
$$
so that $\zeta^\star(\{m\}^n)$ has another expression
$$
\sum^n_{k=1}\sum_{|\bm\alpha|=n}\zeta(m\alpha_1,m\alpha_2,\ldots,m\alpha_k).
$$
Let $E(mn,k)$ be the sum of all multiple zeta values of depth $k$ and weight $mn$
with arguments are multiples of $m\geq 2$. More precisely,
$$
E(mn,k)=\sum_{|\bm\alpha|=n}\zeta(m\alpha_1,m\alpha_2,\ldots,m\alpha_k).
$$
This sum of multiple zeta values $E(mn,k)$
is just a  part of $\zeta^\star(\{m\}^n)$.

Gangl, Kaneko, and Zagier \cite{GKZ} proved that $E(2n,2)=\frac34\zeta(2n)$, 
for $n\geq 2$. Shen and Cai \cite{SC} gave formulas for $E(2n,3)$ and $E(2n,4)$
in terms of $\zeta(2n)$ and $\zeta(2)\zeta(2n-2)$. Using an explicit generating function
for $E(2n,k)$, Hoffman \cite{Hof2} gave a general formula of $E(2n,k)$ for $k\leq n$.

Here we are able to express $E(mn,k)$ in terms of $\zeta(\{m\}^p)$ and 
$\zeta^\star(\{m\}^q)$ with $p+q=n$ and $p\geq k$. By convension we 
let $\zeta(\{m\}^0)=1$ and $\zeta^\star(\{m\}^0)=1$.
\begin{thmA} \label{thm.A}  
For a pair of positive integers $m$, $n$ with $m\geq 2$, we have for $1\leq k\leq n$,
$$
E(mn,k)=\sum_{p+q=n}(-1)^{p-k}{p\choose k}\zeta(\{m\}^p)\zeta^\star(\{m\}^q).
$$
\end{thmA}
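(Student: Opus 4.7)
\medskip

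\noindent\textbf{Proof plan.} The plan is to introduce a single bivariate generating function that packages all the $E(mn,k)$ at once, and then factor it in two different ways so the identity falls out by comparing coefficients. Set
$$
G(x,y)=\prod_{n=1}^\infty\left(1+\frac{yx^m}{n^m-x^m}\right),
$$
regarded as a formal power series in $x^m$ with coefficients in $\mathbb{Q}[y]$. The first step is to expand this product combinatorially: choosing, for each finite set $\{n_1<n_2<\cdots<n_k\}$, the factor $yx^m/(n_i^m-x^m)=y\sum_{j_i\ge 1}x^{mj_i}/n_i^{mj_i}$, and summing the geometric series, one obtains
$$
G(x,y)=\sum_{k\ge 0}y^k\sum_{n_1<\cdots<n_k}\sum_{j_1,\ldots,j_k\ge 1}\frac{x^{m(j_1+\cdots+j_k)}}{n_1^{mj_1}\cdots n_k^{mj_k}}=\sum_{k,n}E(mn,k)\,y^k x^{mn},
$$
which is the coefficient identity we need on one side.

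Next I would rewrite each factor as
$$
1+\frac{yx^m}{n^m-x^m}=\frac{1-(1-y)x^m/n^m}{1-x^m/n^m},
$$
so that
$$
G(x,y)=\prod_{n=1}^\infty\bigl(1-(1-y)x^m/n^m\bigr)\cdot\prod_{n=1}^\infty\bigl(1-x^m/n^m\bigr)^{-1}.
$$
Using the two product expansions recalled in the introduction, the second factor is $\sum_{q\ge 0}\zeta(\{m\}^q)^\star x^{mq}$, and by substituting a formal parameter into the first, one finds
$$
\prod_{n=1}^\infty\bigl(1-(1-y)x^m/n^m\bigr)=\sum_{p\ge 0}\zeta(\{m\}^p)\,(y-1)^p x^{mp}.
$$
Multiplying these two series gives the second expression
$$
G(x,y)=\sum_{p,q\ge 0}(y-1)^p\,\zeta(\{m\}^p)\,\zeta^\star(\{m\}^q)\,x^{m(p+q)}.
$$

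Finally I would equate the coefficient of $x^{mn}$ in the two expressions for $G(x,y)$, obtaining
$$
\sum_{k=0}^n E(mn,k)\,y^k=\sum_{p+q=n}(y-1)^p\,\zeta(\{m\}^p)\,\zeta^\star(\{m\}^q),
$$
and then extract the coefficient of $y^k$ on the right via the binomial expansion $(y-1)^p=\sum_{k}\binom{p}{k}(-1)^{p-k}y^k$. This immediately yields the claimed formula
$$
E(mn,k)=\sum_{p+q=n}(-1)^{p-k}\binom{p}{k}\zeta(\{m\}^p)\,\zeta^\star(\{m\}^q).
$$

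The conceptual step, and the only place where something nontrivial happens, is recognizing the right generating function $G(x,y)$: one wants a product whose combinatorial expansion produces exactly the MZVs of depth $k$ with each argument a positive multiple of $m$, while simultaneously admitting a clean rewriting that splits off a $(1-y)$-shifted copy of $\prod(1+u^m/n^m)$. Once $G(x,y)$ is in hand, the remainder is bookkeeping with formal power series and the binomial theorem; the only care needed is sign-tracking when passing from $(1-y)x^m$ through the product formula for $\zeta(\{m\}^p)$, which is why the factor $(y-1)^p$ (rather than $(1-y)^p$) appears and ultimately produces the $(-1)^{p-k}$ in the theorem.
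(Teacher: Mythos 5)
Your proposal is correct and takes essentially the same route as the paper: your $G(x,y)$ is exactly the paper's $F_m(\lambda,x)$ under the substitution $y=\lambda+1$, and the two factorizations and coefficient comparisons coincide. The only cosmetic difference is that you extract the coefficient of $y^k$ via the binomial expansion of $(y-1)^p$, whereas the paper differentiates $k$ times in $\lambda$ and sets $\lambda=-1$ --- equivalent operations.
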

By inputing the explicit values of $\zeta(\{m\}^p)$ and $\zeta^\star(\{m\}^q)$
for various $m$, we obtain the sum $E(mn,k)$. In particular, the following are most interesting 
so we list them as theorems.
\begin{thmB} \label{thm.B} 
For a pair of positive integers $n$, $k$ with $1\leq k\leq n$, we have
\begin{eqnarray*}
E(2n,k) &=& \sum_{|\bm\alpha|=n}\zeta(2\alpha_1,2\alpha_2,\ldots,2\alpha_k)\\
&=& \frac{(-1)^{n-k}\pi^{2n}}{(2n+1)!}\sum^{n-k}_{q=0}
{n-q\choose k}{2n+1\choose 2q}2^{2q}B_{2q}\left(\frac 12\right).
\end{eqnarray*}
\end{thmB}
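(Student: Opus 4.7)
The plan is to apply Theorem A in the special case $m=2$ and substitute classical closed-form evaluations of $\zeta(\{2\}^p)$ and $\zeta^\star(\{2\}^q)$. Theorem A immediately gives
$$E(2n,k) = \sum_{p+q=n}(-1)^{p-k}\binom{p}{k}\zeta(\{2\}^p)\zeta^\star(\{2\}^q),$$
so the task reduces to evaluating the two factors and reorganising the resulting double sum into the stated form.

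For the first factor, the generating function $\prod_{n\geq 1}(1+x^2/n^2)$ recorded in the introduction equals $\sinh(\pi x)/(\pi x)$, which follows from Euler's product $\sin(\pi z)/(\pi z) = \prod_n(1-z^2/n^2)$ after the substitution $z=ix$; expanding the hyperbolic sine yields the classical identity $\zeta(\{2\}^p) = \pi^{2p}/(2p+1)!$. For the second factor, I would combine the reciprocal product $\prod_n(1-x^2/n^2)^{-1} = \pi x/\sin(\pi x)$ with the Bernoulli polynomial generating function $te^{yt}/(e^t-1) = \sum_{n\geq 0} B_n(y)\,t^n/n!$. Specialising at $y=1/2$ gives $(t/2)/\sinh(t/2) = \sum_n B_n(1/2)\,t^n/n!$, and the substitution $t \mapsto 2i\pi x$, together with $\sinh(i\pi x) = i\sin(\pi x)$, turns the left hand side into $\pi x/\sin(\pi x)$. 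Since $B_n(1/2)=0$ for odd $n$, comparing coefficients yields
$$\zeta^\star(\{2\}^q) = \frac{(-1)^q 2^{2q}\pi^{2q}B_{2q}(1/2)}{(2q)!}.$$

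Plugging both evaluations into Theorem A, three simplifications fall out. The sign $(-1)^{p-k}(-1)^q$ collapses to $(-1)^{n-k}$, independent of the summation variable. The factor $\pi^{2p}\pi^{2q}$ becomes the constant $\pi^{2n}$. Finally, the denominator $(2(n-q)+1)!(2q)!$ is exactly $(2n+1)!/\binom{2n+1}{2q}$. Pulling these common factors outside the sum and reindexing via $p = n-q$, the expression becomes precisely the right hand side of Theorem B, with the range $0\leq q \leq n-k$ enforced by $\binom{n-q}{k}=0$ for $q>n-k$.

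The argument is essentially a computation, and no conceptual obstacle arises once Theorem A is available. The most error-prone step is the derivation of the closed form for $\zeta^\star(\{2\}^q)$: one must track the sign carefully through $(2i)^{2q} = (-4)^q$ arising from the substitution $t\mapsto 2i\pi x$, and must verify that $B_n(1/2)=0$ for odd $n$ (so that only integer powers of $\pi^2$ remain).
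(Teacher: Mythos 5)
Your proposal is correct and follows the paper's own route exactly: specialize Theorem A at $m=2$, insert $\zeta(\{2\}^p)=\pi^{2p}/(2p+1)!$ and $\zeta^\star(\{2\}^q)=(2\pi i)^{2q}B_{2q}(1/2)/(2q)!$ (the paper's Proposition~\ref{prop.2}), and collapse the signs, powers of $\pi$, and factorials into the stated binomial form, with the truncation at $q\leq n-k$ coming from the vanishing of $\binom{n-q}{k}$. The sign bookkeeping via $(2i)^{2q}=(-4)^q$ and the vanishing of $B_n(1/2)$ for odd $n$ are both handled correctly.
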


From the general identity \cite{Eie, Eie2, Rad}
$$
B_n(kx)=k^{n-1}\sum^{k-1}_{j=0}B_n\left(x+\frac jk\right),
$$
we have
$$
2^{2q}B_{2q}\left(\frac 12\right)=(2-2^{2q})B_{2q}.
$$
So our Theorem B coincides with the result of Hoffman \cite{Hof2}
$$
E(2n,k)=\frac{(-1)^{n-k-1}\pi^{2n}}{(2n+1)!}\sum^{n-k}_{j=0}
{n-j\choose k}{2n+1\choose 2j}2(2^{2j-1}-1)B_{2j}.
$$
The evaluation of $E(4n,k)$ appeared in Gen\v{c}ev's paper \cite{Gen} as a conjecture.
Here we confirm the conjecture is true.
\begin{thmC} \label{thm.C} 
For a pair of positive integers $n$, $k$ with $1\leq k\leq n$, we have
\begin{eqnarray*}
\lefteqn{E(4n,k)} \\
&=& \frac{2^{2n+1}\pi^{4n}}{(4n+2)!}\sum^{n-k}_{q=0}(-1)^{n-q-k}
{n-q\choose k}{4n+2\choose 4q}\sum_{|\bm q|=2q}
{4q\choose 2q_1}(-1)^{q_1}2^{2q}B_{2q_1}\left(\frac 12\right)B_{2q_2}\left(\frac 12\right).
\end{eqnarray*}
\end{thmC}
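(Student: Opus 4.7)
My plan is to derive Theorem C from Theorem A by substituting closed-form evaluations of $\zeta(\{4\}^p)$ and $\zeta^\star(\{4\}^q)$. With $m=4$ and the reindexing $p=n-q$, the upper limit becomes $q\leq n-k$ (since $\binom{p}{k}=0$ for $p<k$) and Theorem A reads
$$
E(4n,k)=\sum_{q=0}^{n-k}(-1)^{n-q-k}\binom{n-q}{k}\zeta(\{4\}^{n-q})\,\zeta^\star(\{4\}^q),
$$
so it suffices to evaluate the two factors explicitly and rearrange the binomials.

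For $\zeta(\{4\}^{p})$, I would factor $1+x^{4}/n^{4}=(1-ix^{2}/n^{2})(1+ix^{2}/n^{2})$ inside the first generating function in the introduction, and apply Euler's product $\prod_{n}(1-y^{2}/n^{2})=\sin(\pi y)/(\pi y)$ twice, with $y=e^{\pm i\pi/4}x$. The product-to-sum identity for sines collapses the expression to $(\cosh(\pi\sqrt{2}x)-\cos(\pi\sqrt{2}x))/(2\pi^{2}x^{2})$, and reading off the coefficient of $x^{4p}$ from the Taylor series of $\cosh$ and $\cos$ produces the classical value $\zeta(\{4\}^{p})=2^{2p+1}\pi^{4p}/(4p+2)!$.

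The delicate ingredient is $\zeta^\star(\{4\}^{q})$. Starting from the factorization
$$
\prod_{n=1}^{\infty}\left(1-\frac{x^{4}}{n^{4}}\right)^{-1}=\frac{\pi x}{\sin(\pi x)}\cdot\frac{\pi x}{\sinh(\pi x)},
$$
I would insert the Bernoulli-polynomial expansions $z/\sin z=\sum_{a\geq 0}(-1)^{a}2^{2a}B_{2a}(1/2)z^{2a}/(2a)!$ and $z/\sinh z=\sum_{b\geq 0}2^{2b}B_{2b}(1/2)z^{2b}/(2b)!$, both consequences of the duplication identity $2^{2k}B_{2k}(1/2)=(2-2^{2k})B_{2k}$ already invoked in the paper. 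Forming the Cauchy product yields a contribution at every $x^{2a+2b}$, so the key technical check is that the $x^{4q+2}$ coefficients vanish. I would justify this by noting that the swap $(a,b)\mapsto(b,a)$ multiplies the signed term by $(-1)^{a+b}$ while leaving the symmetric Bernoulli-factorial factor alone, so the terms with $a+b$ odd cancel in pairs. What survives is
$$
\zeta^\star(\{4\}^{q})=(2\pi)^{4q}\sum_{|\bm q|=2q}\frac{(-1)^{q_{1}}B_{2q_{1}}(1/2)B_{2q_{2}}(1/2)}{(2q_{1})!(2q_{2})!}.
$$

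To finish, I would substitute both evaluations into the displayed form of Theorem A, factor out the common constant $2^{2n+1}\pi^{4n}/(4n+2)!$, and rewrite the remaining factorial ratios via $(4n+2)!/(4n-4q+2)!=(4q)!\binom{4n+2}{4q}$ and $(4q)!/((2q_{1})!(2q_{2})!)=\binom{4q}{2q_{1}}$. The stated double sum then appears directly, with the spurious $2^{2q}$ factor landing inside the inner sum (where it is constant in $q_1,q_2$) exactly as written. The only real obstacle is the reciprocal generating function for $\zeta^\star(\{4\}^{q})$; once that expansion is secured, the passage to the theorem is purely binomial bookkeeping.
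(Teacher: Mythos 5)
Your proposal is correct and follows essentially the same route as the paper: substitute $m=4$ into Theorem A and insert the closed forms $\zeta(\{4\}^{p})=2^{2p+1}\pi^{4p}/(4p+2)!$ and $\zeta^\star(\{4\}^{q})=(2\pi)^{4q}\sum_{|\bm q|=2q}(-1)^{q_1}B_{2q_1}(\tfrac12)B_{2q_2}(\tfrac12)/((2q_1)!(2q_2)!)$, then do the binomial bookkeeping. The only difference is that you rederive these two ingredients directly from the $\cosh-\cos$ and $\tfrac{\pi x}{\sin\pi x}\cdot\tfrac{\pi x}{\sinh\pi x}$ factorizations (including the correct parity-cancellation check), whereas the paper cites them as the $m=2$ cases of its Theorems \ref{thm.02} and \ref{thm.03}; the computations are the same in substance.
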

Our purpose of this paper is to enable the evaluation of $E(2mn,k)$ directly by the fact 
that the evaluations of $\zeta(\{2m\}^p)$ and $\zeta^{\star}(\{2m\}^q)$ are known. 
This paper is organized as follows. In Section 2, we present proofs of Theorem A and Theorem B. 
In Section 3 and 4, we rewrite several evaluations of $\zeta(\{2m\}^n)$ and $\zeta^{\star}(\{2m\}^n)$ 
in the literature by dividing into two cases according to $m$ is even or odd. 
In addition, we give a different expression of $E(2mn,k)$ in the final section.
%
%
\section{Proofs of Theorem A and Theorem B}\label{sec.2}
In order to evaluate the special values at even integers of the Riemann zeta function
$$
\zeta(s)=\sum^\infty_{n=1}\frac 1{n^s},\quad\Re(s)>1,
$$
Euler develped the infinite product formula of the sine function
$$
\sin\pi x=\pi x\prod^\infty_{n=1}\left(1-\frac{x^2}{n^2}\right).
$$
The infinite product
$$
\prod^\infty_{n=1}\left(1-\frac{x^2}{n^2}\right)
$$
is the generating function of $(-1)^n\zeta(\{2\}^n)$.
Indeed we have
\begin{eqnarray*}
\prod^\infty_{n=1}\left(1-\frac{x^2}{n^2}\right) &=&
1-\left(\sum^\infty_{k=1}\frac1{k^2}\right)x^2+
\left(\sum_{1\leq k_1<k_2}\frac1{k_1^2k_2^2}\right)x^4+\cdots \\
&&\qquad\qquad+(-1)^n\left(\sum_{1\leq k_1<k_2<\cdots<k_n}\frac1{k_1^2k_2^2\cdots k_n^2}\right)x^{2n}
+\cdots \\
&=& 1-\zeta(2)x^2+\zeta(2,2)x^4+\cdots+(-1)^n\zeta(\{2\}^n)x^{2n}+\cdots.
\end{eqnarray*}
Using the power series expansion
$$
\frac{\sin \pi x}{\pi x}=1-\frac{(\pi x)^2}{3!}+\frac{(\pi x)^4}{5!}
+\cdots+\frac{(-1)^n(\pi x)^{2n}}{(2n+1)!}+\cdots,
$$
then it implies that the evaluation
$$
\zeta(\{2\}^n)=\frac{\pi^{2n}}{(2n+1)!}.
$$
On the other hand, the inverse of the infinite product
$$
\prod^\infty_{n=1}\left(1-\frac{x^2}{n^2}\right)^{-1}
=\prod^\infty_{n=1}\left\{1+\frac{x^2}{n^2}+\left(\frac{x^2}{n^2}\right)^2+\cdots
+\left(\frac{x^2}{n^2}\right)^k+\cdots\right\}
$$
is the generating function of the sequence of sum of multiple zeta values \cite{Hof2}
$$
\zeta^\star(\{2\}^n)=\sum^n_{k=1}E(2n,k)
=\sum^n_{k=1}\sum_{|\bm\alpha|=n}\zeta(2\alpha_1,2\alpha_2,\ldots,2\alpha_k).
$$
Also such an infinite product, according to the infinite product formula of the sine function,
is equal to 
$$
\frac{\pi x}{\sin \pi x}
$$
or
$$
\frac{2\pi i xe^{\pi i x}}{e^{2\pi i x}-1}
$$
and has the power series expansion 
$$
\sum^\infty_{n=0}\frac{(2\pi i x)^{2n}}{(2n)!}B_{2n}\left(\frac 12\right),
$$
where $B_n(x)$ are Bernoulli polynomials defined by 
$$
\frac{te^{xt}}{e^t-1}=\sum^\infty_{n=0}\frac{B_n(x)t^n}{n!} \quad \mbox{for} \; n=0,1,2,\ldots.
$$
Here we summarize our previous discussion as follows which we needed
in the evaluation of $E(2n,k)$.
\begin{proposition}[\cite{Hof2}] \label{prop.2} 
For any positive integer $n$, we have
\begin{equation}\label{eq.03} 
\zeta(\{2\}^n)=\frac{\pi^{2n}}{(2n+1)!},
\end{equation}
\begin{equation} \label{eq.04} 
\zeta^\star(\{2\}^n)=\sum^n_{k=1}\sum_{|\bm\alpha|=n}\zeta(2\alpha_1,2\alpha_2,\ldots,2\alpha_k)
=(2\pi i)^{2n}\frac{B_{2n}\left(\frac 12\right)}{(2n)!}
=\frac{(-1)^n\pi^{2n}}{(2n)!}(2-2^{2n})B_{2n}.
\end{equation}
\end{proposition}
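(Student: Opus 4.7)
The plan is to work directly from Euler's sine product formula, which the excerpt has already laid the groundwork for. First I would verify \eqref{eq.03} by expanding
$$
\prod_{n=1}^\infty\left(1-\frac{x^2}{n^2}\right) = \sum_{n=0}^\infty (-1)^n \zeta(\{2\}^n) x^{2n}
$$
(so that the coefficient extraction is justified by absolute convergence on $|x|<1$) and then dividing Euler's identity $\sin\pi x = \pi x \prod(1-x^2/n^2)$ through by $\pi x$. Matching this against the Taylor series of $\sin\pi x/(\pi x)$ and comparing the coefficient of $x^{2n}$ yields $(-1)^n\zeta(\{2\}^n) = (-1)^n \pi^{2n}/(2n+1)!$, which is \eqref{eq.03}.

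For \eqref{eq.04}, I would start from the already-established generating function identity
$$
\sum_{n=0}^\infty \zeta^\star(\{2\}^n) x^{2n} \;=\; \prod_{n=1}^\infty\left(1-\frac{x^2}{n^2}\right)^{-1} \;=\; \frac{\pi x}{\sin \pi x}.
$$
The key trick is to rewrite the right-hand side in a form suited to the Bernoulli polynomial generating function. Using $\sin\pi x=(e^{i\pi x}-e^{-i\pi x})/(2i)$, I would obtain
$$
\frac{\pi x}{\sin \pi x} \;=\; \frac{2\pi i x \, e^{i\pi x}}{e^{2\pi i x}-1}
\;=\; \frac{(2\pi i x)\, e^{(1/2)(2\pi i x)}}{e^{2\pi i x}-1},
$$
which is exactly $te^{xt}/(e^t-1)$ evaluated at $t=2\pi i x$ and $x=\tfrac12$. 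Plugging into the definition of $B_n(x)$ gives
$$
\frac{\pi x}{\sin \pi x} \;=\; \sum_{n=0}^\infty B_n\!\left(\tfrac12\right)\frac{(2\pi i x)^n}{n!}.
$$
Since $B_{2n+1}(1/2)=0$ (either from the symmetry $B_n(1-x)=(-1)^n B_n(x)$ applied at $x=1/2$, or as a standard fact), only even powers of $x$ survive. Comparing the coefficient of $x^{2n}$ yields the middle equality of \eqref{eq.04}.

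To pass to the final form in \eqref{eq.04}, I would invoke the distribution relation for Bernoulli polynomials quoted later in the paper,
$$
B_n(kx)=k^{n-1}\sum_{j=0}^{k-1} B_n\!\left(x+\tfrac{j}{k}\right),
$$
with $k=2$, $x=0$, and $n=2n$, which gives $B_{2n}=2^{2n-1}(B_{2n}(0)+B_{2n}(1/2))=2^{2n-1}(B_{2n}+B_{2n}(1/2))$ and hence $2^{2n}B_{2n}(1/2)=(2-2^{2n})B_{2n}$. Substituting this and absorbing $(2\pi i)^{2n}=(-1)^n(2\pi)^{2n}$ produces the stated closed form. None of the steps are deep; the only moderately delicate point is the rewriting of $\pi x/\sin\pi x$ into the exponential form that matches the Bernoulli generating function, and the bookkeeping of signs and factors of $2$ in the final simplification.
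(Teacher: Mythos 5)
Your proposal is correct and follows the same route the paper takes: both derive \eqref{eq.03} by matching the Euler product against the Taylor series of $\sin\pi x/(\pi x)$, and both derive \eqref{eq.04} by writing $\pi x/\sin\pi x = 2\pi i x\,e^{\pi i x}/(e^{2\pi i x}-1)$ and reading off coefficients from the Bernoulli polynomial generating function at $t=2\pi i x$, $x=\tfrac12$, with the final simplification coming from the distribution relation $2^{2n}B_{2n}(\tfrac12)=(2-2^{2n})B_{2n}$. The sign and factor bookkeeping in your last step checks out.
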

Now we are ready to prove Theorem A and Theorem B.
\begin{proof}[Proof of Theorem A]
For real number $\lambda$, we consider the infinite product
$$
F_m(\lambda,x)=\prod^\infty_{n=1}\left(1+\frac{\lambda x^m}{n^m}\right)
\left(1-\frac{x^m}{n^m}\right)^{-1}
$$
which is a product of two infinite products
$$
\prod^\infty_{n=1}\left(1+\frac{\lambda x^m}{n^m}\right)\quad\mbox{and}\quad
\prod^\infty_{n=1}\left(1-\frac{x^m}{n^m}\right)^{-1}.
$$
The above products are generating functions of
$$
\lambda^n\zeta(\{m\}^n)\quad\mbox{and}\quad
\zeta^\star(\{m\}^n),
$$
respectively. Therefore, $F_m(\lambda,x)$ is the generating function of the convolution of
the two corresponding sequences. Hence the coefficient of $x^{mn}$ of $F_m(\lambda,x)$
is given by
$$
\sum_{p+q=n}\lambda^p\zeta(\{m\}^p) \zeta^\star(\{m\}^q).
$$
On the other hand, we rewrite $F_m(\lambda,x)$ as 
$$
F_m(\lambda,x)=\prod^\infty_{n=1}\left\{
1+(\lambda+1)\frac{x^m}{n^m}+(\lambda+1)\frac{x^{2m}}{n^{2m}}+\cdots+
(\lambda+1)\frac{x^{km}}{n^{km}}+\cdots\right\}
$$
and its coefficient of $x^{mn}$ is given by 
$$
\sum^n_{r=1}(\lambda+1)^rE(mn,r).
$$
This leads to the identity
\begin{eqnarray}\label{eq.05} 
\sum^n_{r=1}(\lambda+1)^rE(mn,r)=\sum_{p+q=n}\lambda^p\zeta(\{m\}^p)
\zeta^\star(\{m\}^q).
\end{eqnarray}
Differentiate both sides of the above identity with respect to $\lambda$ for $k$ times
and then set $\lambda=-1$, we obtain that 
$$
E(mn,k)=\sum_{p+q=n}(-1)^{p-k}{p \choose k}\zeta(\{m\}^p)\zeta^\star(\{m\}^q).
$$
\end{proof}

Taking $\lambda=1$ into Eq.(\ref{eq.05}), we have the following.
\begin{corollary} 
For a pair of positive integers $m, n$ with $m\geq 2$, we have for $1\leq k\leq n$,
$$
\sum_{r=1}^n 2^r E(mn,r)=\sum_{p+q=n}\zeta(\{m\}^p)\zeta^{\star}(\{m\}^q).
$$
\end{corollary}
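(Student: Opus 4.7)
The plan is to specialize the generating-function identity (\ref{eq.05}) that was established in the course of proving Theorem A. Recall that identity:
\begin{equation*}
\sum_{r=1}^n (\lambda+1)^r E(mn,r) = \sum_{p+q=n} \lambda^p \zeta(\{m\}^p)\zeta^\star(\{m\}^q),
\end{equation*}
which holds as an identity in the free parameter $\lambda$, since it arose from equating two different expansions of the generating function $F_m(\lambda,x)$. The Corollary is recognizable at a glance as the case in which the binomial weight $(\lambda+1)^r$ collapses to $2^r$ and the monomial weight $\lambda^p$ collapses to $1$; both happen simultaneously precisely when $\lambda=1$.

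First I would substitute $\lambda = 1$ on both sides of (\ref{eq.05}). On the left, $(\lambda+1)^r$ becomes $2^r$, so the left-hand side reads $\sum_{r=1}^{n} 2^{r} E(mn,r)$. On the right, every factor $\lambda^p$ collapses to $1$, so the double sum reduces to $\sum_{p+q=n}\zeta(\{m\}^p)\zeta^\star(\{m\}^q)$. Matching the two specialized expressions yields the claimed identity at once.

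There is no genuine obstacle here: once Theorem A's proof has produced (\ref{eq.05}) as an equality of two analytic (or formal) series in $\lambda$, the Corollary is literally a one-point evaluation. The only sanity check worth making is that the derivation of (\ref{eq.05}) never imposed a restriction on $\lambda$, which is clear since $\lambda$ enters only through the infinite product $\prod_{n\ge 1}(1+\lambda x^m/n^m)$, valid for all real $\lambda$ and small $|x|$. Consequently the setting $\lambda=1$ is admissible, and the Corollary follows immediately.
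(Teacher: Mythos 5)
Your proposal is correct and is exactly the paper's argument: the authors obtain the Corollary by setting $\lambda=1$ in Eq.~(\ref{eq.05}). Nothing further is needed.
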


\begin{proof}[Proof of Theorem B]
Substitute $\zeta(\{2\}^p)$ and $\zeta^\star(\{2\}^q)$ by Eq.(\ref{eq.03}) and Eq.(\ref{eq.04}) into Theorem A, we obtain that 
\begin{eqnarray*}
E(2n,k) &=& \sum_{p+q=n}(-1)^{p-k}{p\choose k}\frac{\pi^{2p}}{(2p+1)!}
(2\pi i)^{2q}\frac{B_{2q}\left(\frac12\right)}{(2q)!}\\
&=&\frac{(-1)^{n-k}\pi^{2n}}{(2n+1)!}\sum^{n-k}_{q=0}
{n-q\choose k}{2n+1\choose 2q}2^{2q}B_{2q}\left(\frac 12\right).
\end{eqnarray*}
\end{proof}
%
%
\section{Evaluations of $\zeta(\{2m\}^n)$} \label{sec.03}
The evaluation of $\zeta(\{2m\}^n)$ is available in \cite{AK,Mun}.
However, what we need is the explicit value of $\zeta(\{2m\}^n)$.
So we calculate them directly here for the reason of self-content.
\begin{theorem} \label{thm.01} 
Suppose that $m$ and $n$ are positive integers with $m\geq 3$ odd. Let $w=e^{2\pi i/m}$, then we have
$$
\zeta(\{2m\}^n)=\frac{2\cdot(2\pi)^{2mn}}{(2mn+m)!}
\left\{
\sum^{(m-1)/2}_{r=1}(-1)^{r-1}\sum_{0\leq j_1<j_2<\cdots<j_r\leq m-1}
(w^{j_1}+w^{j_2}+\cdots+w^{j_r})^{2mn+m}
\right\}.
$$
\end{theorem}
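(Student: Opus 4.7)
The plan is to derive a closed form for $\prod_{n=1}^\infty(1+x^{2m}/n^{2m})$, the generating function of $\zeta(\{2m\}^n)$, via the infinite product for $\sinh$, extract the coefficient of $x^{2mn}$, and then rewrite the result over the $m$-th roots of unity. Using the polynomial identity $1+y^m=\prod_{j=0}^{m-1}(1+w^jy)$ (valid for $m$ odd) with $y=x^2/n^2$, interchanging the products over $n$ and $j$, and applying $\sinh(\pi z)/(\pi z)=\prod_{n\geq 1}(1+z^2/n^2)$ with $z=\omega^jx$ where $\omega:=e^{i\pi/m}$ satisfies $\omega^2=w$, I obtain
$$\prod_{n=1}^\infty\Bigl(1+\frac{x^{2m}}{n^{2m}}\Bigr)=\prod_{j=0}^{m-1}\frac{\sinh(\pi\omega^jx)}{\pi\omega^jx}=\frac{(-1)^{(m-1)/2}}{(\pi x)^m}\prod_{j=0}^{m-1}\sinh(\pi\omega^jx),$$
having used $\prod_{j=0}^{m-1}\omega^j=\omega^{m(m-1)/2}=(-1)^{(m-1)/2}$ for $m$ odd.

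Next, expanding each $\sinh(\pi\omega^jx)=(e^{\pi\omega^jx}-e^{-\pi\omega^jx})/2$, multiplying out into a sum over $\epsilon\in\{\pm1\}^m$, and reading off the coefficient of $x^{2mn+m}$ yields
$$\zeta(\{2m\}^n)=\frac{(-1)^{(m-1)/2}\pi^{2mn}}{2^m(2mn+m)!}\sum_{\epsilon\in\{\pm1\}^m}\Bigl(\prod_j\epsilon_j\Bigr)\Bigl(\sum_j\epsilon_j\omega^j\Bigr)^{2mn+m}.$$
To convert the $\omega$-sums into the $w$-sums demanded by the theorem, I would substitute $\chi_j=(-1)^j\epsilon_j$ and exploit the identity $-\omega=e^{i\pi(m+1)/m}=w^{(m+1)/2}$, which is itself a primitive $m$-th root of unity; the map $j\mapsto j(m+1)/2\bmod m$ permutes $\mathbb{Z}/m\mathbb{Z}$ because $\gcd((m+1)/2,m)=1$ for odd $m$. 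Reindexing by this permutation and using $\prod_j\epsilon_j=(-1)^{m(m-1)/2}\prod_j\chi_j=(-1)^{(m-1)/2}\prod_j\chi_j$ cancels the two factors of $(-1)^{(m-1)/2}$ and produces
$$\zeta(\{2m\}^n)=\frac{\pi^{2mn}}{2^m(2mn+m)!}\sum_{\chi\in\{\pm1\}^m}\Bigl(\prod_j\chi_j\Bigr)\Bigl(\sum_j\chi_jw^j\Bigr)^{2mn+m}.$$

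Finally, I would use $\sum_{j=0}^{m-1}w^j=0$: setting $P=\{j:\chi_j=+1\}$ gives $\sum_j\chi_jw^j=2\sum_{j\in P}w^j$, and since $N:=2mn+m$ is odd the involution $\chi\leftrightarrow-\chi$ (equivalently $P\leftrightarrow P^c$) leaves the summand unchanged, so pairing up and choosing representatives with $|P|\leq(m-1)/2$ collapses the sum to
$$\sum_\chi\Bigl(\prod_j\chi_j\Bigr)\Bigl(\sum_j\chi_jw^j\Bigr)^N=2^{N+1}\sum_{r=1}^{(m-1)/2}(-1)^{r-1}\sum_{0\leq j_1<\cdots<j_r\leq m-1}(w^{j_1}+\cdots+w^{j_r})^N,$$
the $r=0$ term vanishing because $0^N=0$. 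Substituting and simplifying $2^{N+1-m}\pi^{2mn}=2(2\pi)^{2mn}$ produces the claimed identity. The main obstacle is the middle step's conversion from $\omega$-sums to $w$-sums: one must recognize that $-\omega$ is already a primitive $m$-th root of unity and track the sign $(-1)^{m(m-1)/2}=(-1)^{(m-1)/2}$ from $\prod_j(-1)^j$ carefully; without this bookkeeping the expression remains naturally indexed by the $2m$-th roots of unity and does not match the form stated in the theorem.
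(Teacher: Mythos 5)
Your proof is correct and follows essentially the same route as the paper: factor the generating function $\prod_n\bigl(1\pm x^{2m}/n^{2m}\bigr)$ over roots of unity via Euler's product for $\sin$/$\sinh$, expand the resulting product into exponentials, extract the coefficient of $x^{2mn+m}$, and collapse the $\pm1$-sum using $\sum_{j=0}^{m-1}w^j=0$ together with the $P\leftrightarrow P^{c}$ pairing. The only difference is that the paper starts from $\prod_n\bigl(1-x^{2m}/n^{2m}\bigr)$ and $\sin(w^j\pi x)$ with $w$ already an $m$-th root of unity (exploiting that $j\mapsto 2j$ is a bijection mod odd $m$), which avoids your extra reindexing from $2m$-th to $m$-th roots of unity via $-\omega=w^{(m+1)/2}$.
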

\begin{proof}
The infinite product formula of the sine function implies that for $|x|<1$ and $x\neq 0$,
\begin{eqnarray*}
1+\sum^\infty_{n=1}(-1)^n\zeta(\{2m\}^n)x^{2mn} &=&
\prod^\infty_{n=1}\left(1-\frac{x^{2m}}{n^{2m}}\right) \\
&=& \prod^\infty_{n=1}\prod^{m-1}_{j=0}\left(1-\frac{(w^jx)^2}{n^2}\right) 
\ =\  \frac 1{(\pi x)^m}\prod^{m-1}_{j=0}\sin(w^j\pi x).
\end{eqnarray*}
Now we express the product of sine functions into a linear combinations of sine functions as
$$
\prod^{m-1}_{j=0}\sin(w^j\pi x)
=\frac{(-1)^{(m-1)/2}}{2^{m-1}}\sum_{\varepsilon_j=\pm 1\atop{1\leq j\leq m-1}}
\sign(\varepsilon)\cdot \sin( \pi x(w^{m-1}+\varepsilon_1w^{m-2}+\cdots+\varepsilon_{m-1}))
$$
with
$$
\sign(\varepsilon)=\left\{\begin{array}{ll}
1,&\mbox{if the number of $-1$ in 
$\varepsilon=(\varepsilon_1,\varepsilon_2,\ldots,\varepsilon_{m-1})$ is even;}\\
-1, & \mbox{otherwise.}\end{array}\right.
$$
As $w^m=1$ and $m\geq 3$, 
$$
w^{m-1}+w^{m-2}+\cdots+w+1=0.
$$
So we are able to express the product as 
$$
\frac{(-1)^{(m-1)/2}}{2^{m-1}}\sum^{(m-1)/2}_{r=1}
(-1)^{r-1}\sum_{0\leq j_1<j_2<\cdots<j_r\leq m-1}
\sin(2\pi x(w^{j_1}+w^{j_2}+\cdots+w^{j_r})).
$$
The coefficient of $x^{2mn+m}$ of the above combination of sine function
gives the evaluation of $\zeta(\{2m\}^n)$.
\end{proof}
Here are some evaluations of $\zeta(\{2m\}^n)$:
\begin{eqnarray}\label{eq.06} 
\zeta(\{6\}^n) &=& \frac{6\cdot (2\pi)^{6n}}{(6n+3)!},\\ \label{eq.07} 
\zeta(\{10\}^n) &=& \frac{10\cdot(2\pi)^{10n}}{(10n+5)!}\left\{
1-\left(2\cos\frac{2\pi}{5}\right)^{10n+5}-\left(2\cos\frac{4\pi}{5}\right)^{10n+5}\right\},\\
\zeta(\{14\}^n) &=&\frac{14\cdot(2\pi)^{14n}}{(14n+7)!} \label{eq.08} 
\left\{1-\sum_{j=1}^3\left(2\cos\frac{2j\pi}{7}\right)^{14n+7}
+\sum^3_{j=1}\left(1+2\cos\frac{2j\pi}{7}\right)^{14n+7}\right.\\ \nonumber
&&\qquad\left.+\left(\frac{-1+i\sqrt{7}}{2}\right)^{14n+7}
+\left(\frac{-1-i\sqrt{7}}{2}\right)^{14n+7}
\right\}.
\end{eqnarray}
\begin{theorem}\label{thm.02} 
Suppose that $m$ and $n$ are positive integers with $m$ even. Then
$$
\zeta(\{2m\}^n)=\frac{(-1)^{n+1+\frac m2}\pi^{2mn}}{2^{m-2}(2mn+m)!}\cdot
\Im\left(\sum_{{\footnotesize\sign}(\varepsilon)=1\atop{\varepsilon_j=\pm 1,1\leq j\leq m-1}}
(w^{m-1}+\varepsilon_1w^{m-2}+\cdots+\varepsilon_{m-1})^{2mn+m}\right)
$$
with $w=\exp(2\pi i/2m)$ and 
$$
\sign(\varepsilon)=\left\{\begin{array}{ll}
1,&\mbox{if the number of $-1$ in 
$\varepsilon=(\varepsilon_1,\varepsilon_2,\ldots,\varepsilon_{m-1})$ is even;}\\
-1, & \mbox{otherwise.}\end{array}\right.
$$
\end{theorem}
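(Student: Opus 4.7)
The plan is to mirror the proof of Theorem \ref{thm.01}, the main structural differences stemming from the fact that now $w=e^{\pi i/m}$ is a primitive $2m$-th root of unity, so $w^m=-1$. Starting from the same factoring $1-x^{2m}/n^{2m}=\prod_{j=0}^{m-1}(1-(w^jx)^2/n^2)$, Euler's sine product yields
$$
\prod_{n=1}^\infty\Bigl(1-\frac{x^{2m}}{n^{2m}}\Bigr)=\frac{1}{i^{m-1}(\pi x)^m}\prod_{j=0}^{m-1}\sin(w^j\pi x),
$$
since $\prod_{j=0}^{m-1}w^j=w^{m(m-1)/2}=i^{m-1}$.

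Next I would linearise the product of sines exactly as in the odd case: expand each sine via $\sin(w^j\pi x)=(e^{iw^j\pi x}-e^{-iw^j\pi x})/(2i)$, multiply out into $2^m$ exponentials indexed by $\varepsilon\in\{\pm 1\}^m$ with weight $(-1)^{N(\varepsilon)}$, and pair $\varepsilon$ with $-\varepsilon$. The decisive difference is that for $m$ even $(-1)^{N(-\varepsilon)}=(-1)^{m-N(\varepsilon)}=(-1)^{N(\varepsilon)}$, so the pairing now produces cosines instead of sines:
$$
\prod_{j=0}^{m-1}\sin(w^j\pi x)=\frac{(-1)^{m/2}}{2^{m-1}}\sum_{\varepsilon\in\{\pm 1\}^{m-1}}(-1)^{N(\varepsilon)}\cos\bigl(\pi x\,A(\varepsilon)\bigr),
$$
where $A(\varepsilon)=w^{m-1}+\varepsilon_1 w^{m-2}+\cdots+\varepsilon_{m-1}$ after choosing the representative of each pair with coefficient $+1$ in front of $w^{m-1}$. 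Combining the two displays and using $(-1)^{m/2}/i^{m-1}=i$ reduces the prefactor to $i/(2^{m-1}(\pi x)^m)$. Because $m$ is even, $2mn+m$ is also even, so the $x^{2mn+m}$ coefficient of $\cos(\pi xA)$ equals $(-1)^{mn+m/2}\pi^{2mn+m}A^{2mn+m}/(2mn+m)!$; equating $x^{2mn}$ coefficients on both sides gives
$$
\zeta(\{2m\}^n)=\frac{i(-1)^{n+m/2}\pi^{2mn}}{2^{m-1}(2mn+m)!}\sum_{\varepsilon\in\{\pm 1\}^{m-1}}(-1)^{N(\varepsilon)}A(\varepsilon)^{2mn+m}.
$$
Since $\zeta(\{2m\}^n)$ is real, the $\varepsilon$-sum must be purely imaginary, which foreshadows the $\Im$ appearing in the theorem.

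The main obstacle is converting this expression to the theorem's form, which restricts the sum to $\mathrm{sign}(\varepsilon)=+1$ terms and replaces $2^{m-1}$ by $2^{m-2}$. For this I would introduce a fixed-point-free, sign-flipping involution $\tau$ on $\{\pm 1\}^{m-1}$ realising complex conjugation at the level of $A$. Using $\overline{w^j}=-w^{m-j}$ (which follows from $w^m=-1$), one computes $\overline{A(\varepsilon)}=\varepsilon_{m-1}-w-\sum_{j=1}^{m-2}\varepsilon_jw^{j+1}$; its coefficient of $w^{m-1}$ is $-\varepsilon_{m-2}$, so multiplying by $-\varepsilon_{m-2}$ restores the leading coefficient to $+1$ and defines $\tau\varepsilon\in\{\pm 1\}^{m-1}$ by $A(\tau\varepsilon)=-\varepsilon_{m-2}\overline{A(\varepsilon)}$. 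The properties to check are: $\tau^2=\mathrm{id}$ (conjugation is involutive and the sign squares to $1$); $\tau$ has no fixed points (such a fixed point would force $\varepsilon_{m/2}=-\varepsilon_{m/2}$); $\tau$ flips $(-1)^{N(\varepsilon)}$ because $m-1$ is odd; and $A(\tau\varepsilon)^{2mn+m}=\overline{A(\varepsilon)^{2mn+m}}$ because $(-\varepsilon_{m-2})^{2mn+m}=1$. Pairing $\varepsilon$ with $\tau\varepsilon$ then collapses the $\varepsilon$-sum to $2i\,\Im\bigl(\sum_{\mathrm{sign}(\varepsilon)=+1}A(\varepsilon)^{2mn+m}\bigr)$, and substitution absorbs $i\cdot 2i=-2$ (turning $(-1)^{n+m/2}$ into $(-1)^{n+1+m/2}$) and one factor of $2$ (turning $2^{m-1}$ into $2^{m-2}$), yielding exactly the identity in the statement.
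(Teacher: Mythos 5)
Your proposal is correct and follows essentially the same route as the paper: the same sine-product factorization over the $2m$-th roots of unity, the same linearization into cosines with sign $(-1)^{N(\varepsilon)}$, and the same complex-conjugation pairing between the $\sign(\varepsilon)=+1$ and $\sign(\varepsilon)=-1$ terms (the paper's two cases $\varepsilon_{m-2}=\pm 1$ are exactly your uniform map $A(\tau\varepsilon)=-\varepsilon_{m-2}\overline{A(\varepsilon)}$). The only quibbles are cosmetic: the fixed-point contradiction occurs at the index $\varepsilon_{m/2-1}$ rather than $\varepsilon_{m/2}$ (and in any case fixed-point-freeness already follows from the sign flip), and $\tau$ should be understood as the explicit permutation of sign vectors rather than as being defined by the complex identity alone.
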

\begin{proof}
The infinite product formula of the sine function implies that for $|x|<1$ and $x\neq 0$,
\begin{eqnarray}\nonumber
1+\sum_{n=1}^\infty(-1)^n\zeta(\{2m\}^n)x^{2mn}
&=&\prod^\infty_{n=1}\left(1-\frac{x^{2m}}{n^{2m}}\right)
\ =\ \prod^\infty_{n=1}\prod^{m-1}_{j=0}\left(1-\frac{(w^jx)^2}{n^2}\right)\\
&=&\frac{w^{-m(m-1)/2}}{\pi^mx^m}\prod^{m-1}_{j=0}\sin(w^j\pi x).
\label{eq.09} 
\end{eqnarray}
Now the product of sine function can be expressed as
\begin{eqnarray*}
\lefteqn{\frac{(-1)^{m/2}}{2^{m-1}}\sum_{\varepsilon_j=\pm 1\atop{1\leq j\leq m-1}}
\sign(\varepsilon)\cdot\cos(\pi x(w^{m-1}+\varepsilon_1 w^{m-2}+\cdots+\varepsilon_{m-1}))}\\
&=&\frac{(-1)^{m/2}}{2^{m-1}}\sum_{\varepsilon_j=\pm 1\atop{1\leq j\leq m-1}}
\sign(\varepsilon)\cdot\sum^\infty_{n=0}\frac{(-1)^n}{(2n)!}
\left[\pi x(w^{m-1}+\varepsilon_1 w^{m-2}+\cdots+\varepsilon_{m-1})
\right]^{2n}.
\end{eqnarray*}
The coefficient of $x^{2mn+m}$ of Eq.\,(\ref{eq.09}) then gives the evaluation of 
$\zeta(\{2m\}^n)$ up to a constant.
\begin{equation}\label{eq.10} 
\zeta(\{2m\}^n)=\frac{(-1)^nw^{-m(m-1)/2}\pi^{2mn}}{2^{m-1}(2mn+m)!}
\sum_{\varepsilon_j=\pm 1\atop{1\leq j\leq m-1}}
\sign(\varepsilon)\cdot (w^{m-1}+\varepsilon_1w^{m-2}+\cdots+\varepsilon_{m-1})^{2mn+m}.
\end{equation}
Let
\begin{eqnarray*}
A &=& \{w^{m-1}+\varepsilon_1w^{m-2}+\cdots+\varepsilon_{m-1}\,:\,
\sign(\varepsilon)=1\}, \\
B &=& \{w^{m-1}+\varepsilon_1w^{m-2}+\cdots+\varepsilon_{m-1}\,:\,
\sign(\varepsilon)=-1\}.
\end{eqnarray*}
Choose $B_j\in B$ and write $B_j=w^{m-1}+\varepsilon_1w^{m-2}+\cdots+\varepsilon_{m-1}$.
If $\varepsilon_{m-2}=1$, then
\begin{eqnarray*}
-\overline{B_j} &=& w^m(w^{-m+1}+\varepsilon_1w^{-m+2}+\cdots+
\varepsilon_{m-3}w^{-2}+w^{-1}+\varepsilon_{m-1}) \\
&=& w^{m-1}+\varepsilon_{m-3}w^{m-2}+\cdots
+\varepsilon_1w^2+w-\varepsilon_{m-1}.
\end{eqnarray*}
Since the number of $-1$ in $(\varepsilon_{m-3},\varepsilon_{m-4},\ldots,
\varepsilon_1,1,-\varepsilon_{m-1})$ is even, this implies $-\overline{B_j}\in A$.
If $\varepsilon_{m-2}=-1$, then
\begin{eqnarray*}
\overline{B_j} &=& w^{-m+1}+\varepsilon_1w^{-m+2}+\cdots
+\varepsilon_{m-3}w^{-2}-w^{-1}+\varepsilon_{m-1}\\
&=&w^{m-1}-\varepsilon_{m-3}w^{m-2}-\cdots-\varepsilon_1w^2-w+\varepsilon_{m-1}.
\end{eqnarray*}
Since the number of $-1$ in $(-\varepsilon_{m-3},\ldots,-\varepsilon_1,-1,\varepsilon_{m-1})$
is even, $\overline{B_j}\in A$.
The above fact make a one-to-one corresponding from $B$ to $A$.
Also we can simplify the summation in Eq.\,(\ref{eq.10}) if we write
$A=\{A_1,A_2,\ldots,A_{2^{m-2}}\}$ and $B=\{B_1,B_2,\ldots,B_{2^{m-2}}\}$:
\begin{eqnarray*}
\lefteqn{\sum_{\varepsilon_j=\pm 1\atop{1\leq j\leq m-1}}
\sign(\varepsilon)\cdot (w^{m-1}+\varepsilon_1w^{m-2}+\cdots+\varepsilon_{m-1})^{2mn+m}}\\
&=& \sum^{2^{m-2}}_{j=1}\left(A_j^{2mn+m}-B_j^{2mn+m}\right) 
\ =\ \sum^{2^{m-2}}_{j=1}\left(A_j^{2mn+m}-\overline{A_j}^{2mn+m}\right) \\
&=& 2\,i\,\Im\left(\sum_{{\footnotesize\sign}(\varepsilon)=1\atop{\varepsilon_j=\pm 1,1\leq j\leq m-1}}
(w^{m-1}+\varepsilon_1w^{m-2}+\cdots+\varepsilon_{m-1})^{2mn+m}\right).
\end{eqnarray*}
Now $w^{-m(m-1)/2}\cdot i=(-1)^{1+\frac m2}$, this completes our proof.
\end{proof}
Here are some explicit evaluations:
\begin{eqnarray}\label{eq.11} 
\zeta(\{4\}^n) &=& \frac{2^{2n+1}\pi^{4n}}{(4n+2)!},\\ \label{eq.12} 
\zeta(\{8\}^n) &=& \frac{2^{4n+1}\pi^{8n}}{(8n+4)!}\left\{
\left(2\cos\frac{\pi}8\right)^{8n+4}+\left(2\sin\frac\pi 8\right)^{8n+4}\right\},\\
\zeta(\{12\}^n)&=&\frac{3\cdot 2^{6n-1}\pi^{12n}}{(12n+6)!} \label{eq.13} 
\left\{2^{12n+6}+(\sqrt{3}+1)^{12n+6}+(\sqrt{3}-1)^{12n+6}\right\}.
\end{eqnarray}
%
%
\section{Evaluations of $\zeta^\star(\{2m\}^n)$} \label{sec.04}
There are several evaluations of $\zeta^\star(\{2m\}^n)$ available such as 
Hoffman \cite{Hof} or S. Muneta \cite[Theorem A]{Mun}, the later is in terms of Bernoulli numbers as
\begin{eqnarray*}
\lefteqn{\zeta^\star(\{2m\}^n)=\pi^{2mn}\times}\\
&&\left\{\sum_{k_0+\cdots+k_{m-1}=mn\atop k_i\geq 0}(-1)^{m(n-1)}\left(
\prod^{m-1}_{j=0}\frac{(2^{2k_j}-2)B_{2k_j}}{(2k_j)!}\right)
\exp\left(\frac{2\pi i}m\sum^{m-1}_{\ell =0}\ell\cdot k_\ell\right)\right\},
\end{eqnarray*}
for all positive integers $m$ and $n$. However, we note that there is a little difference when $m$ is even or odd. Here we prove a slight
different version in Bernoulli polynomials.
\begin{theorem}\label{thm.03} 
Suppose that $m$, $n$ are positive integers with $m$ even. Then
$$
\zeta^\star(\{2m\}^n)=(2\pi)^{2mn}\sum_{|\bm p|=mn}\left(\prod^m_{j=1}
\frac{B_{2p_j}\left(\frac12\right)}{(2p_j)!}\right)
\exp\left(\frac{2\pi i}{m}\sum^m_{\ell=1}(\ell-1)p_\ell\right).
$$
\end{theorem}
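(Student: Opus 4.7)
The plan is to read off $\zeta^\star(\{2m\}^n)$ from its standard generating function
$$G_m(x) := \prod_{n=1}^\infty\left(1-\frac{x^{2m}}{n^{2m}}\right)^{-1} = \sum_{n=0}^\infty\zeta^\star(\{2m\}^n)\,x^{2mn}$$
by factoring $G_m(x)$ into an $m$-fold product of series whose coefficients are already Bernoulli polynomials at $1/2$. Setting $\omega = e^{2\pi i/m}$, the elementary polynomial identity $1-y^m = \prod_{k=0}^{m-1}(1-\omega^k y)$ applied with $y=x^2/n^2$ gives
$$1-\frac{x^{2m}}{n^{2m}} = \prod_{k=0}^{m-1}\left(1-\frac{\omega^k x^2}{n^2}\right),$$
and taking the product over $n$ together with Euler's product for sine produces
$$G_m(x) = \prod_{k=0}^{m-1}\frac{\pi\sqrt{\omega^k}\,x}{\sin\bigl(\pi\sqrt{\omega^k}\,x\bigr)}.$$

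Next I will apply the expansion
$$\frac{\pi y}{\sin\pi y} = \sum_{p=0}^\infty\frac{(2\pi i y)^{2p}}{(2p)!}\,B_{2p}\!\left(\frac{1}{2}\right)$$
already used in the proof of Theorem B. Substituting $y=\sqrt{\omega^k}\,x$ only the square of the root enters and each factor becomes
$$\sum_{p\geq 0}\frac{(2\pi i)^{2p}\,\omega^{kp}}{(2p)!}\,B_{2p}\!\left(\frac{1}{2}\right)x^{2p}.$$
Multiplying these $m$ series and extracting the coefficient of $x^{2mn}$ yields
$$\zeta^\star(\{2m\}^n) = (2\pi i)^{2mn}\sum_{p_0+\cdots+p_{m-1}=mn}\left(\prod_{k=0}^{m-1}\frac{B_{2p_k}(1/2)}{(2p_k)!}\right)\omega^{\sum_{k=0}^{m-1}kp_k}.$$

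Finally, since $m$ is even we have $(2\pi i)^{2mn}=(-1)^{mn}(2\pi)^{2mn}=(2\pi)^{2mn}$; this is the only place the parity hypothesis is used (for $m$ odd a sign $(-1)^n$ would survive and a separate statement would be needed). Relabelling $p_\ell := p_{\ell-1}$ for $\ell=1,\ldots,m$ converts the exponent into $\sum_{\ell=1}^{m}(\ell-1)p_\ell$ and delivers the formula in the theorem. The only subtlety I foresee is the ambiguity of $\sqrt{\omega^k}$, but it is cosmetic: either fix the branch $\sqrt{\omega^k}:=e^{\pi i k/m}$ and note that only even powers appear in the expansion, or treat the entire derivation as an identity of formal power series in $x^2$, in which case the square roots never need to be named at all. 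No substantive obstacle is expected; the computation is a careful book-keeping parallel to the $m=1$ case (Theorem B), with the $m$-th roots of unity doing all the combinatorial work.
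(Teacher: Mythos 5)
Your proposal is correct and is essentially the paper's own argument: the paper factors $1-x^{2m}/n^{2m}$ over the $2m$-th root of unity $w=e^{2\pi i/2m}$ (so its $w^{j-1}$ is exactly your chosen branch $\sqrt{\omega^{j-1}}=e^{\pi i(j-1)/m}$) and expands each factor $\frac{2\pi i y\,e^{\pi i y}}{e^{2\pi i y}-1}=\frac{\pi y}{\sin\pi y}$ by Bernoulli polynomials at $\tfrac12$, then extracts the coefficient of $x^{2mn}$ and uses $(2\pi i)^{2mn}=(2\pi)^{2mn}$ for $m$ even, exactly as you do. The differences are purely notational.
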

\begin{proof}
Let $w=\exp(2\pi i/2m)$ be the $2m$-th root of unity. 
The generating function of $\zeta^\star(\{2m\}^n)$ is 
\begin{eqnarray*}
1+\sum^\infty_{n=1}\zeta^\star(\{2m\}^n)x^{2mn}
&=&\prod^\infty_{n=1}\left(1-\frac{x^{2m}}{n^{2m}}\right)^{-1} \\
&=& \prod^\infty_{n=1}\prod^m_{j=1}\left(1-\frac{(w^{j-1}x)^2}{n^2}\right)^{-1}\\
&=& \prod^m_{j=1}\frac{2\pi i(w^{j-1}x)e^{\pi i(w^{j-1}x)}}{e^{2\pi i(w^{j-1}x)}-1}.
\end{eqnarray*}
The coefficient of $x^{2mn}$ in the power series expansion of the above product is 
$$
\sum_{|\bm p|=mn}\prod^m_{j=1}\frac{(2\pi i w^{j-1})^{2p_j}}{(2p_j)!}
B_{2p_j}\left(\frac12\right).
$$
Then we write it as  
$$
(2\pi)^{2mn}\sum_{|\bm p|=mn}\left(\prod^m_{j=1}
\frac{B_{2p_j}\left(\frac12\right)}{(2p_j)!}\right)
\exp\left(\frac{2\pi i}{m}\sum^m_{\ell=1}(\ell-1)p_\ell\right).
$$
\end{proof}
\begin{theorem}\label{thm.04} 
Suppose that $m$, $n$ are positive integers with $m$ odd and $m\geq 3$. Then
$$
\zeta^\star(\{2m\}^n)=
(2\pi i)^{2mn}\sum_{|\bm p|=2mn}\left(\prod^m_{j=1}
\frac{B_{p_j}}{(p_j)!}\right)
\exp\left(\frac{2\pi i}{m}\sum^m_{\ell=1}(\ell-1)p_\ell\right).
$$
\end{theorem}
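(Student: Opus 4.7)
The plan is to parallel the proof of Theorem~\ref{thm.03} but to use the primitive $m$-th root of unity $\omega=e^{2\pi i/m}$ directly (instead of the $2m$-th root $w=e^{\pi i/m}$), exploiting the fact that $m$ is odd. Since $\gcd(2,m)=1$, the map $j\mapsto 2(j-1)\bmod m$ is a bijection on $\{1,\ldots,m\}$, so $\{(\omega^{j-1})^2:1\le j\le m\}$ is exactly the set of $m$-th roots of unity, giving
$$
1-\frac{x^{2m}}{n^{2m}}=\prod_{j=1}^{m}\left(1-\frac{(\omega^{j-1}x)^2}{n^2}\right).
$$
Inverting and applying Euler's sine product factor by factor, I would obtain
$$
\prod_{n=1}^{\infty}\left(1-\frac{x^{2m}}{n^{2m}}\right)^{-1}=\prod_{j=1}^{m}\frac{\pi\omega^{j-1}x}{\sin(\pi\omega^{j-1}x)}.
$$

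Next I would rewrite each factor by the identity $\pi z/\sin(\pi z)=2\pi iz\,e^{\pi iz}/(e^{2\pi iz}-1)$. Taking the product over $j$, the exponential prefactors combine into $\exp\bigl(\pi i x\sum_{j=1}^{m}\omega^{j-1}\bigr)=1$, because $\sum_{j=0}^{m-1}\omega^{j}=0$ for $m\ge 2$. This cancellation, which fails when one uses the $2m$-th roots needed in the even case, is precisely what converts the Bernoulli polynomial value $B_n(\tfrac12)$ of Theorem~\ref{thm.03} into the plain Bernoulli number $B_n$, leaving
$$
\prod_{n=1}^{\infty}\left(1-\frac{x^{2m}}{n^{2m}}\right)^{-1}=\prod_{j=1}^{m}\frac{2\pi i\omega^{j-1}x}{e^{2\pi i\omega^{j-1}x}-1}.
$$

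Finally I would expand each factor by $t/(e^{t}-1)=\sum_{p\ge 0}B_{p}t^{p}/p!$ and extract the coefficient of $x^{2mn}$. The left-hand side contributes $\zeta^{\star}(\{2m\}^{n})$, and the right-hand side produces
$$
\sum_{|\bm p|=2mn}\prod_{j=1}^{m}\frac{B_{p_j}(2\pi i\omega^{j-1})^{p_j}}{p_j!}.
$$
Pulling out $(2\pi i)^{2mn}=\prod_{j}(2\pi i)^{p_{j}}$ and combining the phases $\omega^{(j-1)p_{j}}$ into the single exponential $\exp\bigl(\tfrac{2\pi i}{m}\sum_{\ell=1}^{m}(\ell-1)p_{\ell}\bigr)$ yields exactly the stated formula. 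The only nontrivial points to check are the parity-based bijection in the factorization step and the vanishing $\sum_{j}\omega^{j-1}=0$; both depend essentially on $m\ge 3$ being odd, and together they explain why the odd case involves $B_{p_j}$ rather than $B_{p_j}(\tfrac12)$ and why the index set is $|\bm p|=2mn$ rather than $mn$.
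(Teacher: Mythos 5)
Your proposal is correct and follows essentially the same route as the paper: factor $1-x^{2m}/n^{2m}$ over the $m$-th roots of unity (valid since $\gcd(2,m)=1$), write each factor as $2\pi i z e^{\pi i z}/(e^{2\pi i z}-1)$, use $\sum_{j=0}^{m-1}\omega^{j}=0$ to drop the exponential prefactor, and expand via $t/(e^t-1)=\sum B_p t^p/p!$. The only nitpick is that the vanishing of $\sum_j\omega^{j-1}$ needs only $m\ge 2$, not oddness; oddness is what lets you work with $m$-th rather than $2m$-th roots of unity in the first place.
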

\begin{proof}
Let $w=\exp(2\pi i/m)$ be the $m$-th root of unity. 
The generating function of $\zeta^\star(\{2m\}^n)$ is 
\begin{eqnarray*}
1+\sum^\infty_{n=1}\zeta^\star(\{2m\}^n)x^{2mn}
&=&\prod^\infty_{n=1}\left(1-\frac{x^{2m}}{n^{2m}}\right)^{-1} \\
&=& \prod^\infty_{n=1}\prod^m_{j=1}\left(1-\frac{(w^{j-1}x)^2}{n^2}\right)^{-1}\\
&=& \prod^m_{j=1}\frac{2\pi i(w^{j-1}x)e^{\pi i(w^{j-1}x)}}{e^{2\pi i(w^{j-1}x)}-1}.
\end{eqnarray*}
As $m\geq 3$,
$$
1+w+w^2+\cdots+w^{m-1}=\frac{1-w^m}{1-w}=0,
$$
so the product is equal to 
$$
\prod^m_{j=1}\frac{2\pi i(w^{j-1}x)}{e^{2\pi i(w^{j-1}x)}-1}.
$$
The coefficient of $x^{2mn}$ in the power series expansion at $x=0$ of the above product is 
$$
\sum_{|\bm p|=2mn}\prod^m_{j=1}\frac{(2\pi i w^{j-1})^{p_j}}{(p_j)!}B_{p_j}.
$$
It is also written as 
$$
(2\pi i)^{2mn}\sum_{|\bm p|=2mn}\left(\prod^m_{j=1}
\frac{B_{p_j}}{(p_j)!}\right)
\exp\left(\frac{2\pi i}{m}\sum^m_{\ell=1}(\ell-1)p_\ell\right).
$$
This proves our theorem.
\end{proof}

\begin{remark}
During the proof of the above theorem, if we stop at the step 
$$
\prod^\infty_{n=1}\left(1-\frac{x^{2m}}{n^{2m}}\right)^{-1} 
= \prod^m_{j=1}\frac{2\pi i(w^{j-1}x)e^{\pi i(w^{j-1}x)}}{e^{2\pi i(w^{j-1}x)}-1}
$$
and then expand into power series, we obtain another expression of 
$\zeta^{\star}(\{2m\}^n)$ with $m$ odd:
\begin{eqnarray}\label{eq.14} 
\zeta^\star(\{2m\}^n)=(\pi i)^{2mn}\sum_{|\bm q|=mn}\left(\prod^m_{j=1}
\frac{2^{2q_j}}{(2q_j)!}B_{2q_j}\left(\frac 12\right)\right)
\exp\left(\frac{4\pi i}m\sum^m_{\ell=1}(\ell-1)q_\ell\right).
\end{eqnarray}
\end{remark}
Here are some explicit evaluations:
\begin{eqnarray} \label{eq.15} 
\zeta^\star(\{4\}^n) &=& (2\pi)^{4n}\sum_{|\bm q|=2n}(-1)^{q_2}\frac{B_{2q_1}\left(\frac 12\right)
B_{2q_2}\left(\frac 12\right)}{(2q_1)!(2q_2)!},\\ \label{eq.16} 
\zeta^\star(\{6\}^n) &=& (-1)^n(2\pi)^{6n}\sum_{|\bm q|=6n}\frac{B_{q_1}B_{q_2}B_{q_3}}
{q_1!q_2!q_3!}\exp\left(\frac{2\pi i}{3}(q_2+2q_3)\right),\\
\zeta^\star(\{8\}^n) &=& (2\pi)^{8n}\sum_{|\bm q|=4n}\left(\prod^4_{j=1}
\frac{B_{2q_j}\left(\frac 12\right)}{(2p_j!)}\right) \label{eq.17} 
\exp\left(\frac{\pi i}2\sum^4_{\ell=1}(\ell-1)q_\ell\right).
\end{eqnarray}

The evaluation of $E(4n,k)$ was conjectured in \cite{Gen} as 
$$
(-1)^k\frac{(-4\pi^4)^n}{(4n+2)!}\sum^{n-k}_{u=0}{n-u\choose k}
{4n+2\choose 4u}Y_u,
$$
where
$$
Y_u=\frac 2{(-4)^u}\sum^{2u}_{u_1=0}(-1)^{u_1}{4u\choose 2u_1}
(2-4^{u_1})B_{2u_1}(2-4^{2u-u_1})B_{2(2u-u_1)}.
$$
Now it becomes our Theorem C in a slight different notation.
\begin{thmC} 
For a pair of positive integers $n$, $k$ with $1\leq k\leq n$, we have
\begin{eqnarray*}
\lefteqn{E(4n,k)} \\
&=& \frac{2^{2n+1}\pi^{4n}}{(4n+2)!}\sum^{n-k}_{q=0}(-1)^{n-q-k}
{n-q\choose k}{4n+2\choose 4q}\sum_{|\bm q|=2q}
{4q\choose 2q_1}(-1)^{q_1}2^{2q}B_{2q_1}\left(\frac 12\right)B_{2q_2}\left(\frac 12\right).
\end{eqnarray*}
\end{thmC}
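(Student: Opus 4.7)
The plan is to derive Theorem C as a direct specialization of Theorem A with $m=4$, inserting the explicit closed forms for $\zeta(\{4\}^p)$ and $\zeta^{\star}(\{4\}^q)$ already recorded in Eq.\,(\ref{eq.11}) and Eq.\,(\ref{eq.15}). There is no new ingredient beyond these three identities; the entire argument is a matter of carefully reorganising the resulting double sum.

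First I would write
$$
E(4n,k) \;=\; \sum_{p+q=n} (-1)^{p-k}\binom{p}{k}\,\zeta(\{4\}^p)\,\zeta^{\star}(\{4\}^q)
$$
from Theorem A, and substitute $\zeta(\{4\}^p)=\frac{2^{2p+1}\pi^{4p}}{(4p+2)!}$ from Eq.\,(\ref{eq.11}) together with
$$
\zeta^{\star}(\{4\}^q) \;=\; (2\pi)^{4q}\sum_{|\bm q'|=2q}(-1)^{q_2'}\frac{B_{2q_1'}(\tfrac12)B_{2q_2'}(\tfrac12)}{(2q_1')!(2q_2')!}
$$
from Eq.\,(\ref{eq.15}). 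Reindexing via $p=n-q$ so that the outer sum runs over $0\le q\le n-k$, the powers of $\pi$ combine into $\pi^{4n}$ and the powers of $2$ combine into $2^{2n+1}\cdot 2^{2q}$, producing the prefactor $\frac{2^{2n+1}\pi^{4n}}{(4n+2)!}$ that appears in the statement.

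Next I would absorb the factorials into binomial coefficients: the ratio $\frac{(4n+2)!}{(4n-4q+2)!}$ contributes $\binom{4n+2}{4q}(4q)!$, and distributing the remaining $(4q)!$ over $\frac{1}{(2q_1')!(2q_2')!}$ produces $\binom{4q}{2q_1'}$ since $2q_1'+2q_2'=4q$. The only sign subtlety is that the formula asserted in Theorem C carries $(-1)^{q_1}$ whereas Eq.\,(\ref{eq.15}) gives $(-1)^{q_2'}$; these agree because $q_1'+q_2'=2q$ is even, so $(-1)^{q_1'}=(-1)^{q_2'}$. The outer sign $(-1)^{n-q-k}$ matches $(-1)^{p-k}$ verbatim after the reindexing, and the binomial $\binom{n-q}{k}$ is just $\binom{p}{k}$.

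There is no real obstacle — the argument is essentially a bookkeeping exercise, and the hardest part is simply to get the placement of the factor $2^{2q}$ correct (it can sit either inside or outside the inner sum, since it is independent of $q_1$), and to verify that the sign conventions of the two references align. Once these cosmetic points are checked, the resulting expression is identical, term for term, to the one displayed in Theorem C, which also matches Gen\v{c}ev's conjectural formula after recognising $Y_u$ as the value of the inner sum up to the standard identity $2^{2j}B_{2j}(\tfrac12)=(2-2^{2j})B_{2j}$ used earlier in the paper.
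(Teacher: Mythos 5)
Your proposal is correct and is essentially identical to the paper's own proof: specialize Theorem A to $m=4$, insert Eq.\,(\ref{eq.11}) and Eq.\,(\ref{eq.15}), and reorganize factorials into binomial coefficients. Your remark that $(-1)^{q_2}=(-1)^{q_1}$ because $q_1+q_2=2q$ is even is a useful detail the paper glosses over, but it does not change the route.
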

\begin{proof}
Substituting $m=4$ in the formula of $E(mn,k)$ in Theorem A, we have
$$
E(4n,k)=\sum_{p+q=n}(-1)^{p-k}{p\choose k}\zeta(\{4\}^p)\zeta^\star(\{4\}^q).
$$
By Eq.\,(\ref{eq.11}) and Eq.\,(\ref{eq.15}), $E(4n,k)$ becomes
$$
\sum_{p+q=n}(-1)^{p-k}{p\choose k}
\frac{2^{2p+1}\pi^{4p}}{(4p+2)!}
\sum_{|\bm q|=2q}\frac{(-1)^{q_1}(2 \pi)^{4q}}{(2q_1)!(2q_2)!}
B_{2q_1}\left(\frac 12\right) B_{2q_2}\left(\frac 12\right)
$$
Note that the summation is equal to zero if $p<k$, so we can rewrite it as the form stated in the theorem.
\end{proof}

Based on the evaluations of $\zeta(\{6\}^p)$ and $\zeta^\star(\{6\}^q)$ (see Eq.\,(\ref{eq.06})
and Eq.\,(\ref{eq.16})), we have the following evalution of $E(6n,k)$:
\begin{eqnarray*}
E(6n,k) &=& \frac{(-1)^{n-k}6(2\pi)^{6n}}{(6n+3)!}
\sum^{n-k}_{q=0}{n-q\choose k}{6n+3\choose 6q}\\
&&\qquad\qquad\times 
\sum_{|\bm q|=6q}\frac{\exp(2\pi i(q_2+2q_3)/3)(6q)!}{(q_1)!(q_2)!(q_3)!}
B_{q_1} B_{q_2}B_{q_3},
\end{eqnarray*}
or using Eq.(\ref{eq.14}) we have another expression 
\begin{eqnarray*}
E(6n,k) &=& \sum_{|\bm\alpha|=n}\zeta(6\alpha_1,6\alpha_2,\ldots,6\alpha_k) \\
&=& \frac{(-1)^{n-k}6(2\pi)^{6n}}{(6n+3)!}
\sum^{n-k}_{q=0}{n-q\choose k}{6n+3\choose 6q}\\
&&\qquad\qquad\times 
\sum_{|\bm q|=3q}\frac{\exp(4\pi i(q_2+2q_3)/3)(6q)!}{(2q_1)!(2q_2)!(2q_3)!}
B_{2q_1}\left(\frac 12\right) B_{2q_2}\left(\frac 12\right) B_{2q_3}\left(\frac 12\right).
\end{eqnarray*}

At last we give the evaluation of $E(8n,k)$:
\begin{eqnarray*}
\lefteqn{E(8n,k)}\\
&=& \sum_{|\bm\alpha|=n}\zeta(8\alpha_1,8\alpha_2,\ldots,8\alpha_k) \\
&=& \frac{2^{4n+1}\pi^{8n}}{(8n+4)!}\sum^{n-k}_{q=0}(-1)^{n-q-k}{n-q\choose k}
{8n+4\choose 8q}
\left\{\left(2\cos\frac{\pi}8\right)^{8n-8q+4}
+\left(2\sin\frac{\pi}8\right)^{8n-8q+4}\right\}\\
&&\qquad\qquad\times
\sum_{|\bm q|=4q}2^{4q}(8q)!\left(\prod^4_{j=1}
\frac{B_{2q_j}\left(\frac12\right)}{(2q_j)!}\right)
\exp\left(\frac{\pi i}2\sum^4_{\ell=1}(\ell-1)q_\ell\right).
\end{eqnarray*}

%
%
\section{A final remark}\label{sec.05}
According to our Theorem A, the evaluation of $E(mn,k)$ depends on $\zeta(\{m\}^p)$ and $\zeta^\star(\{m\}^q)$ with $p+q=n$. Both multiple zeta values can be evaluated
in terms of $\zeta(m), \zeta(2m),\ldots, \zeta(nm)$.

Here we list some preliminaries about symmetric functions. For more details, we refer the reader to \cite{Mac, Sta}. 
Let $e_m$, $h_m$, and $p_m$ be the $m$-th elementary, complete homogeneous,
and power-sum symmetric polynomials, 
in infinitely many variables $x_1, x_2,\ldots$, respectively.
They have associated generating functions
$$\begin{array}{lllllllll}
E(t)&:=&\displaystyle\sum^\infty_{j=0}e_jt^j 
     &=&\displaystyle\prod^\infty_{i=1}(1+tx_i),\\
H(t)&:=&\displaystyle\sum^\infty_{j=0}h_jt^j 
      &=&\displaystyle\prod^\infty_{i=1}\frac 1{1-tx_i} &=& E(-t)^{-1},\\
P(t)&:=&\displaystyle\sum^\infty_{j=1}p_jt^{j-1} 
      &=&\displaystyle\sum^\infty_{i=1}\frac{x_i}{1-tx_i} 
      &=&\displaystyle\frac{H'(t)}{H(t)}
      &=&\displaystyle\frac{E'(-t)}{E(-t)}.
\end{array}$$

Let the modified Bell polynomials $P_m(x_1,x_2,\ldots,x_m)$ be 
defined by \cite{Chen,CC}
$$
\exp\left(\sum^\infty_{k=1}\frac{x_k}{k}z^k\right)
=\sum^\infty_{m=0}P_m(x_1,x_2,\ldots,x_m)z^m.
$$
The general explicit expression for $P_m$ is
$$
P_m(x_1,\ldots,x_m)=
\sum_{k_1+2k_2+\cdots+mk_m=m}
\frac 1{k_1!\cdots k_m!}
\left(\frac{x_1}1\right)^{k_1}\cdots
\left(\frac{x_m}m\right)^{k_m}.
$$
\begin{lemma} \label{lma.01} 
Let $j$ be a nonnegative integer. Then we have
\begin{eqnarray} 
e_j &=& P_j(p_1,-p_2,\ldots,(-1)^{j+1}p_j),\label{eq.18}\\  
h_j &=& P_j(p_1,p_2,\ldots,p_j). \label{eq.19} 
\end{eqnarray}
\end{lemma}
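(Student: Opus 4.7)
The plan is to exploit the logarithmic-derivative identity $P(t)=H'(t)/H(t)$ stated just before the lemma, together with the companion fact $\log E(t)=\sum_i\log(1+tx_i)$, and then match coefficients against the defining series of the modified Bell polynomials $P_m$. In effect both identities reduce to a single computation: a generating function whose logarithm is $\sum_{k\geq 1}\frac{y_k}{k}t^k$ is by definition $\sum_m P_m(y_1,\ldots,y_m)t^m$.

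For the second identity $h_j=P_j(p_1,p_2,\ldots,p_j)$, I would integrate the relation $H'(t)/H(t)=P(t)=\sum_{k\geq 1}p_k t^{k-1}$ (valid as a formal power series), using $H(0)=1$, to obtain $\log H(t)=\sum_{k\geq 1}\frac{p_k}{k}t^k$. Exponentiating and comparing with $\exp\bigl(\sum_{k\geq 1}\frac{x_k}{k}z^k\bigr)=\sum_m P_m(x_1,\ldots,x_m)z^m$ under the substitution $x_k=p_k$, $z=t$ yields $H(t)=\sum_{m\geq 0}P_m(p_1,\ldots,p_m)t^m$, and extracting the coefficient of $t^j$ gives the desired formula.

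For the first identity I would proceed analogously starting from $E(t)=\prod_{i\geq 1}(1+tx_i)$: termwise expansion of the logarithm gives $\log E(t)=\sum_i\sum_{k\geq 1}\frac{(-1)^{k+1}}{k}(tx_i)^k=\sum_{k\geq 1}\frac{(-1)^{k+1}p_k}{k}t^k$, which is a purely formal identity requiring no convergence argument. Applying the defining identity of $P_m$ with $x_k=(-1)^{k+1}p_k$ and comparing coefficients of $t^j$ produces $e_j=P_j(p_1,-p_2,\ldots,(-1)^{j+1}p_j)$. Alternatively one could read this off $E(-t)^{-1}=H(t)$ by setting $t\mapsto -t$ in the $h_j$ identity, but the direct route via $\log E(t)$ is cleaner.

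There is essentially no obstacle: both assertions are classical consequences of the Newton-type relations, and the only bookkeeping is the sign pattern $(-1)^{k+1}$ that distinguishes the $e_j$ case from the $h_j$ case. The one thing worth emphasizing in the write-up is that all manipulations occur in the ring of formal power series in $t$ with coefficients in $\mathbb{Q}[x_1,x_2,\ldots]$, so that taking $\log$ and $\exp$ of series with constant term $1$ is legitimate and compatible with the formal definition of $P_m$.
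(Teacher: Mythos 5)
Your proposal is correct and follows essentially the same route as the paper: take the logarithm of the generating function ($E(t)$ or $H(t)$), identify it with $\sum_{k\ge 1}\frac{y_k}{k}t^k$ for $y_k=(-1)^{k+1}p_k$ or $y_k=p_k$, and match against the defining exponential series of $P_m$. The paper carries out exactly this computation for $e_j$ by expanding $\log(1+tx_i)$ termwise and omits the $h_j$ case as analogous; your detour through integrating $P(t)=H'(t)/H(t)$ for the $h_j$ case is only a cosmetic variation.
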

\begin{proof}
We begin with the generating function of $e_j$.
\begin{eqnarray*}
\sum^\infty_{j=0}e_jt^j &=& \prod^\infty_{i=1}(1+tx_i) \ =\  
\exp\left[\sum^\infty_{i=1}\log(1+tx_i)\right] \\
&=& \exp\left[\sum^\infty_{\ell=1}(-1)^{\ell+1}\frac{t^\ell}{\ell}\sum^\infty_{i=1}x_i^\ell\right]\\
&=& \sum^\infty_{m=0}P_m(p_1,-p_2,\ldots,(-1)^{m+1}p_m)t^m.
\end{eqnarray*}
This gives us Eq.\,(\ref{eq.18}). Eq.\,(\ref{eq.19}) can be easily derived by a similar method,
hence we omit it.
\end{proof}
Let $x_k=k^{-s}$, for all $k\geq 1$. 
Then
$$
e_n=\zeta(\{s\}^n),
\quad h_n=\zeta^\star(\{s\}^n),
\quad\mbox{and}\quad p_n=\zeta(sn).
$$
Note that we denote that $\zeta(\{s\}^0)=\zeta^*(\{s\}^0)=1$. Now Eq.\,(\ref{eq.18}) and 
Eq.\,(\ref{eq.19}) give us the evaluations of $\zeta(\{s\}^n)$ and $\zeta^\star(\{s\}^n)$:
\begin{eqnarray}\label{eq.20} 
\zeta(\{s\}^n) &=& P_n(\zeta(s),-\zeta(2s),\ldots,(-1)^{n+1}\zeta(ns)), \\
\zeta^\star(\{s\}^n) &=& P_n(\zeta(s),\zeta(2s),\ldots,\zeta(ns)). \label{eq.21} 
\end{eqnarray}
For example, we have
\begin{eqnarray*}
\zeta(\{s\}^5) &=& \frac 1{5!}\left(\zeta(s)^5-10\zeta(s)^3\zeta(2s)+20\zeta(s)^2\zeta(3s)
                                  -30\zeta(s)\zeta(4s)\right.\\
&&\qquad\qquad\left.+15\zeta(s)\zeta(2s)^2-20\zeta(2s)\zeta(3s)+24\zeta(5s)\right);\\
\zeta^\star(\{s\}^5) &=& \frac 1{5!}\left(\zeta(s)^5+10\zeta(s)^3\zeta(2s)+20\zeta(s)^2\zeta(3s)
                                  +30\zeta(s)\zeta(4s)\right.\\
&&\qquad\qquad\left.+15\zeta(s)\zeta(2s)^2+20\zeta(2s)\zeta(3s)+24\zeta(5s)\right).
\end{eqnarray*}
We can give another evaluations of $\zeta(\{2m\}^n)$ and 
$\zeta^\star(\{2m\}^n)$ in terms of Bernoulli numbers.
\begin{proposition} \label{prop.03} 
Let $m$, $n$ be positive integers. Then we have
$$
\zeta(\{2m\}^n)=(-1)^{n(m+1)}(2\pi)^{2mn}
\sum_{a_1+2a_2+\cdots+na_n=n\atop a_i\geq 0}
\prod^n_{k=1}\frac 1{a_k!}\left(\frac{B_{2km}}{2k\cdot (2km)!}\right)^{a_k} 
$$
and 
$$
\zeta^\star(\{2m\}^n)=(-1)^{mn}\cdot(2\pi)^{2mn}
\sum_{a_1+2a_2+\cdots+na_n=n\atop a_i\geq 0}
\prod^m_{k=1}\frac{(-1)^{a_k}}{a_k!}\left(\frac{B_{2km}}{2k\cdot (2km)!}\right)^{a_k}. 
$$
\end{proposition}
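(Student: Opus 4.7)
The plan is to combine Lemma~\ref{lma.01} with Euler's classical evaluation
$$\zeta(2N)=(-1)^{N+1}\frac{(2\pi)^{2N}B_{2N}}{2\cdot (2N)!},\qquad N\ge 1,$$
and the explicit formula for the modified Bell polynomial $P_n$ stated just before the lemma. Setting $s=2m$ in (\ref{eq.20}) and (\ref{eq.21}), both $\zeta(\{2m\}^n)$ and $\zeta^\star(\{2m\}^n)$ become evaluations of $P_n$ at the arguments $x_k=(-1)^{k+1}\zeta(2km)$ and $x_k=\zeta(2km)$ respectively, for $1\le k\le n$. Expanding $P_n$ via its explicit form turns each into a sum over partitions $a_1+2a_2+\cdots+na_n=n$ with $a_k\ge 0$, of products $\prod_k (x_k/k)^{a_k}/a_k!$.

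Next I would substitute Euler's formula for $\zeta(2km)$ into each $x_k$. Each raised factor contributes the sign $(-1)^{(km+1)a_k}$, the monomial $(2\pi)^{2km\cdot a_k}$, and the Bernoulli piece $\bigl(B_{2km}/(2k\cdot(2km)!)\bigr)^{a_k}$. Under the partition constraint $\sum_k k a_k=n$, the product of $(2\pi)$-powers collapses to the single factor $(2\pi)^{2mn}$, which I can pull outside the sum; this accounts for the weight prefactor common to both formulas.

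What remains is pure sign bookkeeping, and this is essentially the only delicate point. For $\zeta^\star$ the exponent of $-1$ in the $k$-th factor is $(km+1)a_k$, summing to $m\sum_k k a_k+\sum_k a_k=mn+\sum_k a_k$; the first piece gives the global $(-1)^{mn}$ and the second leaves the residual $\prod_k(-1)^{a_k}$ inside the sum, exactly as in the second formula. For $\zeta$ the additional $(-1)^{k+1}$ from $x_k=(-1)^{k+1}\zeta(2km)$ changes the per-factor sign to $(-1)^{k(m+1)a_k}$, whose total exponent is $(m+1)\sum_k k a_k=(m+1)n$; this produces the prefactor $(-1)^{n(m+1)}$ with no residual $(-1)^{a_k}$ inside.

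One minor editorial point worth flagging: the product in the stated formula for $\zeta^\star(\{2m\}^n)$ is written $\prod_{k=1}^m$, but the derivation above naturally yields $\prod_{k=1}^n$. Since only partitions of $n$ are summed, any $a_k$ with $k>n$ is forced to vanish, so extending or truncating the index set beyond $k=n$ is harmless; the two readings agree, and for consistency with the $\zeta(\{2m\}^n)$ formula the natural range is $1\le k\le n$.
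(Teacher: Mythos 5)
Your derivation is correct and follows exactly the route the paper intends: the proposition is stated immediately after Eqs.~(\ref{eq.20}) and (\ref{eq.21}) and the explicit formula for $P_m$, and combining these with Euler's evaluation of $\zeta(2N)$ is precisely the (omitted) proof, with your sign bookkeeping checking out in both cases. You are also right that $\prod_{k=1}^m$ in the second formula is a typo for $\prod_{k=1}^n$.
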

Note that Eq.\,(\ref{eq.20}) and Eq.\,(\ref{eq.21}) could be obtained by setting
$\mathbf i=(s,s,\ldots,s)$ in Eq.\,(\ref{eq.01}) and Eq.\,(\ref{eq.02}), respectively.

Now we can refined Theorem A as follows.
\begin{theorem}\label{thm.05} 
For a pair of positive integers $m$, $n$, with $m\geq 2$, we have 
for $1\leq k\leq n$ that 
\begin{eqnarray*}
E(mn,k)&=&\sum_{p+q=n}(-1)^{p-k}{p\choose k} \\
&& \hspace{12mm}\times P_p(\zeta(m),-\zeta(2m),\ldots,(-1)^{p+1}\zeta(pm))P_q(\zeta(m),\zeta(2m),\ldots,\zeta(qm)).
\end{eqnarray*}
\end{theorem}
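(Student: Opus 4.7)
The plan is to derive this refined theorem as a direct consequence of Theorem A combined with the generating-function identities (\ref{eq.20}) and (\ref{eq.21}). The work is essentially to record what substitution to perform; there is no new analytic content beyond what has already been set up in Section~\ref{sec.05}.

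First, I would recall Theorem A, which asserts
\begin{equation*}
E(mn,k)=\sum_{p+q=n}(-1)^{p-k}\binom{p}{k}\zeta(\{m\}^p)\,\zeta^\star(\{m\}^q).
\end{equation*}
This already has the correct binomial/sign structure. The remaining task is to replace each factor on the right by its modified-Bell-polynomial expression in the values $\zeta(m),\zeta(2m),\ldots$.

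Next, I would invoke the symmetric-function identities in Lemma~\ref{lma.01}, namely $e_j=P_j(p_1,-p_2,\ldots,(-1)^{j+1}p_j)$ and $h_j=P_j(p_1,p_2,\ldots,p_j)$, and specialize the variables by $x_k=k^{-m}$ for all $k\ge 1$. Under this specialization the elementary symmetric function $e_p$ becomes $\zeta(\{m\}^p)$, the complete homogeneous symmetric function $h_q$ becomes $\zeta^\star(\{m\}^q)$, and the power sum $p_j$ becomes $\zeta(jm)$. This yields exactly (\ref{eq.20}) and (\ref{eq.21}) with $s=m$:
\begin{equation*}
\zeta(\{m\}^p)=P_p\bigl(\zeta(m),-\zeta(2m),\ldots,(-1)^{p+1}\zeta(pm)\bigr),\qquad
\zeta^\star(\{m\}^q)=P_q\bigl(\zeta(m),\zeta(2m),\ldots,\zeta(qm)\bigr).
\end{equation*}

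Finally, substituting these two expressions into Theorem A gives the stated formula verbatim. There is no real obstacle here: the only point worth a sentence of care is verifying that the specialization $x_k=k^{-m}$ is legitimate for infinitely many variables (which is fine because the generating series $E(t)$ and $H(t)$ converge absolutely for small $|t|$ when $m\ge 2$, so the formal identities between $e_j,h_j$ and the $P_j$'s become numerical identities). With that noted, the theorem is proved as a one-line corollary of Theorem~A and Lemma~\ref{lma.01}.
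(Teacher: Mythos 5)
Your proposal is correct and matches the paper's own (implicit) argument exactly: the theorem is obtained by substituting the Bell-polynomial expressions (\ref{eq.20}) and (\ref{eq.21}) with $s=m$ into Theorem A. The extra remark on the legitimacy of the specialization $x_k=k^{-m}$ is a reasonable bit of added care but changes nothing of substance.
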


%
%


%
%
%

\end{document}